\newcommand{\cvp}{\mbox{$\stackrel{p}{\longrightarrow}$}}
\def\1{\mathbb{I}}
\newcounter{thm}[section]
\newcounter{appen}
\newtheorem{theor}[thm]{Theorem}
\newtheorem{lem}[thm]{Lemma}
\newtheorem{assumption}[appen]{Assumption}
\newtheorem{lemma}[appen]{Lemma}
\newenvironment{proof}[1][Proof]{\textbf{#1.} }{\ \rule{0.5em}{0.5em}}
\begin{document}

\title{Powerful nonparametric checks for  quantile regression}
\author{Samuel Maistre\thanks{
CREST (Ensai), France. Email: samuel.maistre@ensai.fr }, \quad Pascal Lavergne\thanks{
Toulouse School of Economics, France.  Email: pascal.lavergne@univ-tlse1.fr } \quad and
\quad Valentin Patilea\thanks{
CREST (Ensai), France. Email: patilea@ensai.fr }}
% \date{April 2014}
\date{}
\maketitle

\begin{abstract}
We address the issue of lack-of-fit testing for a parametric quantile
regression. We propose a simple test that involves one-dimensional
kernel smoothing, so that the rate at which it detects local
alternatives is independent of the number of covariates.  The test has
asymptotically gaussian critical values, and wild bootstrap can be
applied to obtain more accurate ones in small samples.  Our procedure
appears to be competitive with existing ones in simulations.  We
illustrate the usefulness of our test on birthweight data.

\bigskip \noindent Keywords: Quantile regression,
Omnibus test, Smoothing.

\bigskip \noindent MSC2000: Primary  62G10
%; Secondary .
\end{abstract}

\newpage

\section{Introduction}

\setcounter{equation}{0}

Quantile regression, as introduced by \cite{Koenker1978}, has
emerged as an alternative to mean regression.  It allows for a richer
data analysis by exploring the effect of covariates at different
quantiles of the conditional distribution of the variable of interest.
Parametric quantile regression generalizes usual regression are is
particularly valuable if variables have asymmetric distributions or
heavy tails.  Koenker's monograph (2005) and the review of \cite{Yu2003}
detail the theory and practice of quantile regression.

As in any statistical modeling exercice, it is crucial to check the
fit of a parametric quantile model.  There has been a large effort
devoted to testing of the fit of parametric mean regressions, however
only few lack-of-fit tests of parametric quantile regressions. \cite{He2003} 
extend the approach of \cite{Stute1997} and is based on a
vector-weighted cumulative summed process of the residuals.  \cite{Bierens2002} 
generalize the integrated conditional moment test of \cite{Bierens1997} 
to quantile regression.  In both cases, the limit
distribution of the test statistic is a non-linear functional of a
Gaussian process, so that implementation may require rather involved
computations to obtain critical values.  \cite{Zheng1998} use kernel
smoothing over the design space, to obtain an asymptotically pivotal
test statistic.  \cite{Horowitz2002} extend such an approach
and propose an adaptive procedure to choose the smoothing parameter.
As in any multidimensional nonparametric problem, the curse of
dimensionality may be detrimental to the performances of the test,
see e.g. \cite{Lavergne2012} for illustrations.

In this paper, we introduce a new testing methodology that avoids
multidimensional smoothing, but still yield an omnibus test.  Our test
has three specific features. First, it does not require smoothing with
respect to all covariates under test.  This allows to mitigate the
curse of dimensionality that appears with nonparametric smoothing,
hence improving the power properties of the test.  Second, the test
statistic is asymptotically pivotal, while wild bootstrap can be used
to obtain small samples critical values of the test. This yields a
test whose level is well controlled by bootstrapping, as shown in
simulations.  Third, our test equally applies whether some of the
covariates are discrete.

The paper is organized as follows. In Section 2, we present our
testing procedure, we study its asymptotic behavior under the null
hypothesis and under a sequence of local alternatives, and we
establish the validity of wild bootstrap.  In Section 3, we compare
the small sample behavior of our test to some existing procedures, and
we illustrate its use on birthweight data. Section 3 concludes.  Section
4 gathers our technical proofs.

\section{Lack-of-Fit Test for Quantile Regression}

\subsection{Principle and Test}

Consider modeling the quantile of a real random variable $Y$
conditional upon covariates $Z \in\mathbb{R}^q$, $q\geq 1$.  We assume
that $Z=(W,X^\prime)^\prime$, where $W$ is continuous and admits a
density with respect to the Lebesgue measure, while $X$ may include
both continuous and discrete variables.  Formally, if $F(\cdot \mid
z)$ denotes the conditional distribution of $Y$ given $Z=z$, the
$\tau$-th conditional quantile is
$
Q_{\tau}(z)=\inf \{y:F(y\mid z)\geq \tau \}
$.
Assuming $F(\cdot \mid z)$ is absolutely continuous for almost all
$z$, this is equivalent to $F(Q_{\tau }(z)\mid z)=\tau $.
The parametric quantile regression model of interest posits that the
conditional $\tau$-th quantile of $Y$ is given by $g (Z;\beta_{0})$,
where $g (\cdot; \beta)$ is known up to the parameter vector $\beta
\in B\subset \mathbb{R}^{p}$, that is,
\begin{equation}
Y=g(Z;\beta_{0})+ \varepsilon,  \qquad F \left( g (Z;\beta_{0}) \mid Z\right) =  \tau
\, .
\label{qu1}
\end{equation}
 The validity of our parametric quantile regression is thus equivalent to
\begin{equation}
{H}_{0}:\ \exists \, \beta_{0} \in B \ : \ F(g(Z;\beta _{0})\mid Z)-\tau =\mathbb{E}\left\{ \mathbb{I}\{Y\leq
g(Z;\beta _{0})\}-\tau \mid Z\right\} =0 \, \text{a.s.}
\label{qu2}
\end{equation}
Hence testing the the correct specification of our parametric quantile
regression models reduces to testing a zero conditional mean hypothesis.
The alternative hypothesis is then
\begin{equation*}
{H}_{1}:\ \mathbb{P}\left[ \mathbb{E}\left\{ \mathbb{I}\{Y\leq g(Z;\beta
)\}-\tau \mid Z\right\} =0\right] <1\quad \text{for any }\beta \in B
\, .
\end{equation*}

The key element of our testing approach is the following lemma. See also \cite{Lavergne2014} for a related result. First
let us introduce some notation. Hereafter, if
$g:\mathbb{R}^k\rightarrow \mathbb{R}$ is an integrable function,
$\mathcal{F}[g]$ denotes its Fourier transform, that is
$$
\mathcal{F}[g] (t)=  \int_{\mathbb{R}^k} \exp(- 2\pi i t^\prime u ) g(u) du \, .
$$
\begin{lem}
\label{Fundamental-Lemma}
Let $\left(W_{1},\, X_{1},\, U_{1}\right)$
and $\left(W_{2},\, X_{2},\, U_{2}\right)$ be two independent draws of
$\left(W,\, X,\, u\right)$,  and
$K(\cdot)$ and $\psi(\cdot)$  even functions with (almost
everywhere) positive Fourier integrable transforms. Define
\[
I\left(h\right)=\mathbb{E}\left[U_{1}U_{2} h^{-p}K\left(\left(W_{1}-W_{2}\right)/h\right)
\psi\left(X_{1}-X_{2}\right)\right]
\, .
\]
Then for any $h>0$,
$
\mathbb{E}\left[U\mid W,X\right]= 0\,\; a.s.
\Leftrightarrow
I (h) = 0
$.
\end{lem}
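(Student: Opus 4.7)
The plan is to reduce both directions to a Fourier-analytic identity expressing $I(h)$ as a weighted $L^2$-norm of the Fourier transform of the conditional mean function $r(w,x)=\mathbb{E}[U\mid W=w,\,X=x]$.

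The forward direction is immediate: if $\mathbb{E}[U\mid W,X]=0$ a.s., then conditioning on $(W_1,X_1,W_2,X_2)$ and using independence of the two draws factorizes the expectation into a product that vanishes, so $I(h)=0$ for every $h>0$.

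For the converse, I first use the tower property and independence to replace $U_i$ by $r(W_i,X_i)$, getting
$$I(h)=\mathbb{E}\!\left[r(W_1,X_1)\,r(W_2,X_2)\,h^{-p}K((W_1-W_2)/h)\,\psi(X_1-X_2)\right].$$
Then I write both $K$ and $\psi$ as inverse Fourier transforms of $\mathcal{F}[K]$ and $\mathcal{F}[\psi]$; the change of variable $s=t/h$ absorbs the $h^{-p}$ factor and gives
$$h^{-p}K((W_1-W_2)/h)\,\psi(X_1-X_2)=\int\!\!\int \mathcal{F}[K](hs)\mathcal{F}[\psi](v)\,e^{2\pi i[s'(W_1-W_2)+v'(X_1-X_2)]}\,ds\,dv.$$
After a Fubini interchange (justified since $\mathcal{F}[K]$, $\mathcal{F}[\psi]\in L^1$ by hypothesis and $r$ is bounded in the intended application, where $U=\mathbb{I}\{Y\le g(Z;\beta_0)\}-\tau\in[-1,1]$), the cross-expectation factorizes by independence into $\phi(s,v)\,\overline{\phi(s,v)}=|\phi(s,v)|^2$, where
$$\phi(s,v)=\mathbb{E}\!\left[r(W,X)\,e^{2\pi i(s'W+v'X)}\right].$$
This yields the key identity
$$I(h)=\int\!\!\int \mathcal{F}[K](hs)\,\mathcal{F}[\psi](v)\,|\phi(s,v)|^2\,ds\,dv.$$

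Since $\mathcal{F}[K](hs)$ and $\mathcal{F}[\psi](v)$ are strictly positive almost everywhere and $|\phi|^2\ge 0$, the equality $I(h)=0$ forces $\phi\equiv 0$ a.e. But $\phi$ is the Fourier transform of the finite signed measure $A\mapsto\mathbb{E}[r(W,X)\mathbf{1}_A(W,X)]$ on $\mathbb{R}^{1+\dim X}$; uniqueness of Fourier transforms for finite signed Borel measures on Euclidean space (which is insensitive to the fact that $X$ may have discrete coordinates) then gives that this measure vanishes, so $r(W,X)=0$ almost surely, i.e.\ $\mathbb{E}[U\mid W,X]=0$ a.s.

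The only non-routine ingredient is the Fubini interchange that produces the integral representation; the rest is a direct consequence of positivity of the Fourier transforms of $K$ and $\psi$ and the injectivity of the Fourier transform on finite signed measures. The hypothesis that $\mathcal{F}[K]$ and $\mathcal{F}[\psi]$ are integrable is used precisely to make this interchange legitimate and to conclude that the identity holds for any $h>0$ simultaneously.
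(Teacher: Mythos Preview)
Your proof is correct and follows essentially the same route as the paper's: both arguments hinge on the identity
\[
I(h)=\int\!\!\int \mathcal{F}[K](hs)\,\mathcal{F}[\psi](v)\,\bigl|\mathbb{E}[\,\mathbb{E}[U\mid W,X]\,e^{2\pi i(s'W+v'X)}\,]\bigr|^{2}\,ds\,dv,
\]
obtained via Fourier inversion, a change of variables, and the tower property, and both conclude from strict positivity of the Fourier weights. Your write-up is in fact slightly more careful than the paper's in two respects: you isolate the Fubini interchange as the point needing justification, and you make explicit that the final injectivity step uses uniqueness of Fourier transforms of finite signed measures (hence works regardless of whether some coordinates of $X$ are discrete), whereas the paper handles both directions in one stroke via the integral identity without spelling these out.
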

{\bf Proof.}%\begin{proof}
Let $\langle \cdot, \cdot \rangle$ denote the standard inner product and
 $\mathcal{F}\left[K\right]$ be the Fourier transform of $K(\cdot)$.
Using Fourier Inversion Theorem, change of variables, and elementary
properties of conditional expectation,
\begin{eqnarray*}
I(h) & = &
\mathbb{E}\left[U_{1}U_{2}\int_{\mathbb{R}^{p}}e^{2\pi
    i\langle t, \; W_{1}-W_{2}\rangle
  }\mathcal{F}\left[K\right]\left(th\right)dt
%\right.
%\\
%& & \times
%\left.
  \int_{\mathbb{R}^{q}}e^{2\pi i \langle s
    ,\;X_{1}-X_{2}\rangle}\mathcal{F}\left[\psi\right]\left(s\right)ds\right]\\ &
= &
\int_{\mathbb{R}^{q}}\int_{\mathbb{R}^{p}}\left|\mathbb{E}\left[\mathbb{E}\left[U\mid
    W,X\right]e^{2\pi i\left\{ \langle t, W
    \rangle + \langle s, X\rangle \right\}
  }\right]\right|^{2}\mathcal{F}\left[K\right]\left(th\right)\mathcal{F}\left[\psi\right]\left(s\right)dtds
\, .
\end{eqnarray*}
Since the Fourier transforms $\mathcal{F}\left[K\right]$ and
$\mathcal{F}\left[\psi\right]$  are  strictly positive, $I(h)=0$
iff
\[
\mathbb{E}\left[\mathbb{E}\left[U\mid W,X\right]
e^{2\pi i\left\{ \langle t, W\rangle + \langle s, X\rangle \right\}}\right] = 0
\quad \forall t, s
\Leftrightarrow
\mathbb{E}\left[U\mid W,X\right] = 0 \qquad \mbox{a.s.}
\qquad \rule{0.5em}{0.5em}
\]
%\end{proof}

%\subsection{The test}
%\label{sec_integ}

From the above results, it is sufficient to test whether $I (h)=0$ for
any arbitrary $h$.  We chose to consider a sequence of $h$ decreasing
to zero when the sample size increases, which is one of the ingredient
that allows to obtain a tractable asymptotic distribution for the test
statistic.  Assume we have at hand a random sample $(Y_i,W_i,X_i)$,
$1\leq i\leq n$, from $(Y,W,X)$.  Then we can estimate
$I\left(h\right)$ by the second-order U-statistic
\begin{equation*}
I_{n}\left( \beta_{0} \right) = I_{n}\left( \beta_{0};h \right) =
\frac{1}{n(n-1)}\sum\limits_{1\leq j\neq i\leq n}U_{i}\left( \beta_{0}
\right) U_{j}\left( \beta_{0} \right) \frac{1}{h} K_{h}\left( W_{i}-W_{j}
\right) \psi( X_{i} - X_{j})
\end{equation*}
where $U_{i}(\beta )=\mathbb{I}\{Y_{i}\leq g(Z_{i};\beta )\}-\tau$ and
$K_{h}(\cdot )=K(\cdot /h)$.

For estimating $\beta_{0}$, we follow \cite{Koenker1978}, who
showed that under (\ref{qu1}) a consistent estimator of
$\beta _{0}$ is  obtained by
minimizing
\begin{equation}
\arg\min_{\beta}
\sum_{i=1}^{n}\rho _{\tau}\left( Y_{i}-g(Z_{i};\beta )\right)
\, ,
\label{est1}
\end{equation}
where $\rho _{\tau }(e)=\left( \tau -\mathbb{I}(e<0) \right) \, e$ is
the so-called check function.  While this is not a differentiable
optimization problem, it is convex and tractable, see e.g.  \cite{Koenker2005} 
for some computational algorithms.  Let us define
\begin{equation}
T_{n}=nh^{1/2}\frac{I_{n}(\widehat{\beta })}{v_{n}}
%\, ,
%\label{def_test}
%\end{equation}
\quad \mbox{where }\quad
%\begin{equation}
v_{n}^{2} =\frac{2\,\tau ^{2}(1-\tau )^{2}}{n(n-1)}
\sum\limits_{j\neq i}h^{-1}K_{h}^2\left( W_{i}-W_{j} \right) \psi^2( X_{i} - X_{j})
\label{var_est1}
\;.
\end{equation}
An asymptotic $\alpha$-level test of  ${H}_{0}$ is then
\begin{quote}
Reject $H_{0}$ if $T_{n}\geq z_{\alpha }$, where $z_{\alpha}$
is the $(1-\alpha )-$quantile of the standard normal distribution.
\end{quote}
Our test statistic is very similar to the one proposed by \cite{Zheng1998}, 
but the latter uses smoothing on all components of $Z$ while
we smooth only on the first component $W$.

The statistic $v_{n}^{2}$ is the variance of $nh^{1/2}I_{n}(\beta
_{0})$ conditional on the $Z_{i}$ under ${H}_{0}$.  In general,
$v_{n}^{2}$ does not consistently estimate the conditional variance of
$nh^{1/2}I_{n}(\beta )$ under the alternative hypothesis. In some
cases $v_{n}^{2}$ overestimates this conditional variance (this is
certainly the case for misspecified median regression model because
$\tau (1-\tau )$ attains the maximum value at $\tau =1/2$), so that
the test may suffer some power loss. In a mean regression context,
\cite{Horowitz2001} and \cite{Guerre2005} proposed
to use a nonparametric estimator of the conditional variance.  This
might be adapted to quantile regression, but in simulations our test
appears to be well-behaved and more powerful than competitors, so we
decided in favor of the simplest estimator $v_{n}^{2}$.

%In the mean regression models the conditional variance of the
%dependent variable is the same under $H_0$ and under the alternative.
%This is no longer the case in the quantile regression context where
%the conditional variance of $U(\beta)$ given $Z$ is $\sigma^2(Z) =
%µ\Var\left(\mathbb{I} \{Y\leq g(Z;\beta )\}\mid Z \right)$.

\subsection{Behavior Under the Null Hypothesis}
\label{sec_null}

To derive the asymptotic properties of our lack-of-fit test, we
introduce our set of assumptions on the data-generating process, the
parametric model (\ref{qu1}), the functions $K(\cdot)$ and
$\psi(\cdot)$, and the bandwidth $h$.

\begin{assumption}
\label{qas1} (a) The random vectors $(\varepsilon_{1},Z_{1}^{\prime })^{\prime
},\ldots ,(\varepsilon_{n},Z_{n}^{\prime })^{\prime }$ are independent copies of the
random vector $(\varepsilon,Z^{\prime })^{\prime }\in \mathbb{R}^{1+q}$. The conditional
$\tau$th quantile of $\varepsilon$ given $Z=\left(W,X'\right)'$ is equal to zero.

(b) The variable $W$ admits an absolutely continuous density with the
respect of the Lebesgue measure on the real line.

(c) The conditional density $f_{\varepsilon}(\cdot \mid z)$ of
$\varepsilon$ given $Z=z$ is uniformly bounded.  There exists $a>0$
such that $f_{\varepsilon}(\cdot \mid z)$ is differentiable on
$(-a,a)$ for any $z$ with $\left\vert f_{\varepsilon}^{\,\prime
}(0\mid z)\right\vert \leq C\infty$.  Moreover, the derivatives
$f_{\varepsilon}^{\,\prime }(\cdot \mid z)$ satisfy a uniform
H\"{o}lder continuity condition, that is there exist positive
constants $C_2$ and $c$ independent of $z$ such that
$\forall\left\vert u_{1}\right\vert ,\left\vert u_{2}\right\vert \leq
a$, $\left\vert f_{\varepsilon}^{\,\prime }(u_{1}\mid
z)-f_{\varepsilon}^{\,\prime }(u_{2}\mid z)\right\vert \leq
C_2\left\vert u_{1}-u_{2}\right\vert ^{c}$.
\end{assumption}

\begin{assumption}
\label{qas2} (a) The parameter space $B$ is a compact convex subset of $\mathbb{R}^{p}$.
$\beta _{0}$ is the unique solution of
$
\min_{B} \mathbb{E}
\left[ \rho _{\tau} \left( Y -g(Z,\beta ) \right)\right]
$ and is an interior point of $B$.

(b) The matrix
\[
\mathbb{E}\left[ f_{\varepsilon}(0\mid Z\,)
\frac{\partial}{\partial \beta}{g}(Z;\beta _{0})\frac{\partial}{\partial \beta'}{g}^{\prime}(Z;\beta _{0})\right]
\]
is finite and nonsingular.

(c) There exists functions $A\left( \cdot
\right) $, $B\left( \cdot \right) $, and $D\left( \cdot \right) $,
with $\mathbb{E}[A^{4}(\,Z\,)]$, $\mathbb{E} [B^{2}(\,Z\,)]<\infty$,
and $\mathbb{E}[D^{4}(\,Z\,)]$,  such that
\begin{equation*}
\left\Vert \frac{\partial}{\partial \beta}{g}(z;\beta )\right\Vert \leq A\left( z\right) ,\qquad
\left\Vert \frac{\partial}{\partial \beta}{g}(z;\beta )
\frac{\partial}{\partial \beta'}{g}^{\prime }(z;\beta )\right\Vert \leq
D(z)\qquad \text{for any }\beta
\, ,
\end{equation*}
\begin{equation*}
\left\Vert \frac{\partial}{\partial \beta}{g}(z;\beta _{1})-
\frac{\partial}{\partial \beta}{g}(z;\beta _{2})\right\Vert \leq
B(z)\left\Vert \beta _{1}-\beta _{2}\right\Vert \qquad \text{for any }
z,\beta _{1},\beta _{2}
\, .
\end{equation*}

(d) The class of functions $\{ g(Z;\beta) : \beta \in B \}$ is a
Vapnik-\v{C}ervonenkis (VC) class.
\end{assumption}
\begin{assumption}
\label{K}
(a) The function $K(\cdot )$ is a bounded symmetric univariate density
of bounded variation with positive Fourier transform.

(b) The function $\psi(\cdot )$ is a bounded symmetric multivariate function
with positive Fourier transform.

(c) $h\rightarrow 0$ and $n^\alpha h^2\rightarrow \infty $ for some
$\alpha\in(0,1)$ as $n\rightarrow \infty$.
\end{assumption}
Our assumptions combine standard assumptions for parametric quantile
regression estimation and specific ones for our lack-of-fit test.
Among the latter, the conditions on the error term $\varepsilon$
impose neither independence of $\varepsilon$ and $Z$, nor a specific
form of dependence such as $\varepsilon= s\left(Z\right) e$ with $e$
independent of $Z$ as in \cite{He2003}.  Assumption \ref{qas2}(d)
is a mild technical condition that guarantees suitable uniform rates
of convergence for some $U-$processes appearing in the proofs. This
condition is satisfied for many parametric models, for instance when
$g(Z,\beta) = q(Z^\prime \beta)$ with
$q:\mathbb{R}\rightarrow\mathbb{R}$ monotone or of bounded variation,
see e.g.  van der Vaart and Wellner (1996, Section 2.6).  Also, if
there is $\beta\in B$ such that $g(Z,\beta)$ is squared integrable,
then Assumption \ref{qas2}(d) follows from \ref{qas2}(c).
 Assumptions on $K(\cdot)$ allows for
the use of a triangular, normal, logistic, Student (including Cauchy),
or Laplace densities.  For $\psi(\cdot)$, one can choose e.g.
$\psi(x)=\exp(-\|x\|^2)$, or any multivariate extension of the aforementioned densities.
Restrictions on the bandwidth are compatible with optimal choices for
regression estimation, see e.g. \cite{Hardle1985}, and for
regression checks, see \cite{Guerre2002} and \cite{Horowitz2002}. 
The following theorem states the asymptotic validity
of our test.
\begin{theor}
\label{as_law} Under the Assumptions \ref{qas1} to \ref{K}, the test based on
$T_{n}$ has asymptotic level $\alpha$ under $H_{0}$.
\end{theor}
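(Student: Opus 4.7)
The plan is to apply Slutsky's theorem after decomposing
\begin{equation*}
T_n \;=\; \frac{nh^{1/2} I_n(\beta_0)}{v_n} \;+\; \frac{nh^{1/2}\bigl[I_n(\widehat\beta) - I_n(\beta_0)\bigr]}{v_n},
\end{equation*}
so that it suffices to establish three facts: (i) a central limit theorem $nh^{1/2} I_n(\beta_0) \stackrel{d}{\longrightarrow} N(0,\sigma^2)$; (ii) consistency $v_n^2 \stackrel{p}{\longrightarrow} \sigma^2 > 0$; and (iii) negligibility of the plug-in error $nh^{1/2}[I_n(\widehat\beta) - I_n(\beta_0)] = o_p(1)$. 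The key structural fact I will use throughout is that under $H_0$, $\mathbb{E}[U_i(\beta_0)\mid Z_i] = 0$, by Lemma \ref{Fundamental-Lemma} and (\ref{qu2}).

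For (i), the statistic $I_n(\beta_0)$ is a completely degenerate second-order $U$-statistic with kernel $H_{ij} = U_i(\beta_0) U_j(\beta_0) h^{-1} K_h(W_i - W_j) \psi(X_i - X_j)$, since $\mathbb{E}[H_{ij}\mid Z_i, U_i(\beta_0)] = 0$. I would invoke Hall's (1984) CLT for degenerate second-order $U$-statistics; its hypotheses reduce to a fourth-over-squared-second moment condition on the kernel. A change of variables $u = (W_i - W_j)/h$, together with Assumption \ref{qas1}(b) and the boundedness of $K$ and $\psi$, gives $\mathbb{E}[H_{ij}^2] = O(h^{-1})$ and $\mathbb{E}[H_{ij}^4] = O(h^{-3})$, so Hall's condition reduces to a bandwidth restriction guaranteed by Assumption \ref{K}(c). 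This yields $nh^{1/2} I_n(\beta_0) \stackrel{d}{\longrightarrow} N(0,\sigma^2)$ for an explicit $\sigma^2 > 0$. Step (ii) is then a routine law of large numbers for the second-order $U$-statistic $v_n^2$, whose expectation converges to the same $\sigma^2$ by the same change of variables and whose variance is $o(1)$ under Assumption \ref{K}(c); the coincidence of the two limits is precisely why $T_n$ is designed to be self-normalized.

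The main obstacle is step (iii), because $\beta \mapsto U_i(\beta) = \mathbb{I}\{\varepsilon_i \leq g(Z_i;\beta) - g(Z_i;\beta_0)\} - \tau$ is non-smooth, so a direct Taylor expansion is unavailable. First, Assumptions \ref{qas1}--\ref{qas2} yield the standard $\sqrt n$-consistency $\|\widehat\beta - \beta_0\| = O_p(n^{-1/2})$ of the \citet{Koenker1978} estimator. Writing $U_i(\widehat\beta) = U_i(\beta_0) + \Delta_i$, the difference $I_n(\widehat\beta) - I_n(\beta_0)$ splits into two cross terms $C_n$ (bilinear in $U(\beta_0)$ and $\Delta$) and a quadratic term $Q_n$ in $\Delta$. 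The uniformly bounded conditional density in Assumption \ref{qas1}(c) gives $\mathbb{E}[|\Delta_i|^k\mid Z_i, \widehat\beta] \leq C |g(Z_i;\widehat\beta) - g(Z_i;\beta_0)|$ for $k \geq 1$, which by Assumption \ref{qas2}(c) is $O_p(n^{-1/2} A(Z_i))$; combined with the kernel scaling this gives $|Q_n| = O_p(n^{-1})$. For $C_n$, I would replace $\Delta_i$ by its conditional-mean linearization $f_\varepsilon(0\mid Z_i)\,\partial_\beta g(Z_i;\beta_0)'(\widehat\beta - \beta_0)$, valid up to a H\"older remainder controlled by Assumption \ref{qas1}(c), and then apply a Hoeffding decomposition of the resulting inner $U$-statistic; its H\'ajek projection is mean-zero because $\mathbb{E}[U_i(\beta_0)\mid Z_i] = 0$, so $C_n = O_p(n^{-1})$. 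The stochastic remainder due to the indicator's non-smoothness would be handled uniformly in $\beta$ over an $O(n^{-1/2})$-neighborhood of $\beta_0$ via a maximal inequality for $U$-processes indexed by the VC class $\{\mathbb{I}\{y \leq g(z;\beta)\}:\beta \in B\}$, whose applicability is precisely what Assumption \ref{qas2}(d) is designed to secure. Multiplying by $nh^{1/2}$, the total plug-in error is $O_p(h^{1/2}) = o_p(1)$, and Slutsky's theorem then concludes the proof.
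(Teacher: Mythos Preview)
Your overall architecture—oracle term plus plug-in error, a degenerate $U$-statistic CLT for the former, and $U$-process tools for the latter—matches the paper's. The paper uses the Hall--Heyde martingale CLT rather than Hall's (1984) $U$-statistic CLT, but these are interchangeable here.

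There is, however, a genuine gap in your treatment of the plug-in error. Your claimed final rate $nh^{1/2}[I_n(\widehat\beta)-I_n(\beta_0)]=O_p(h^{1/2})$ is too optimistic and, tellingly, never invokes the bandwidth condition $n^\alpha h^2\to\infty$ of Assumption~\ref{K}(c). The problem lies in the ``stochastic remainder'' you defer to a $U$-process maximal inequality. Write $\Delta_j=\bigl\{\mathbb{I}(\varepsilon_j\leq G_j)-F_\varepsilon(G_j\mid Z_j)\bigr\}-\bigl\{\mathbb{I}(\varepsilon_j\leq 0)-\tau\bigr\}+\bigl\{F_\varepsilon(G_j\mid Z_j)-\tau\bigr\}$; the first two braces form a centered process in $\beta$ that does \emph{not} inherit the $O_p(n^{-1/2})$ size of $\mathbb{E}[|\Delta_j|\mid Z_j]$. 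It enters both $C_n$ and $Q_n$ and produces a degenerate second-order $U$-process with kernel of order $h^{-1}$. A generic maximal inequality (Sherman's Corollary~4 with the constant envelope) gives only $O_P(n^{-1}h^{-1})$ uniformly in $\beta$, which explodes after multiplication by $nh^{1/2}$. What is needed is a \emph{local} bound exploiting the smallness of the second moment $\mathbb{E}\bigl[\{A_j(\beta)-A_j(\beta_0)\}^2\bigr]\leq C\|\beta-\beta_0\|$ over a shrinking $n^{-1/2}$-neighborhood: this is precisely Lemma~1 of Zheng (1998), which yields $O_P(h^{-1}n^{-1-\alpha/4})$ for any $\alpha\in(0,1)$. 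After scaling by $nh^{1/2}$ one gets $O_P(h^{-1/2}n^{-\alpha/4})$, and it is exactly Assumption~\ref{K}(c) that makes this vanish. Separately, the bound $\mathbb{E}[|\Delta_i|^k\mid Z_i,\widehat\beta]\leq Cn^{-1/2}A(Z_i)$ cannot be used directly to conclude $|Q_n|=O_p(n^{-1})$, because conditioning on $\widehat\beta$ destroys the independence of the $\Delta_i$; one must again work uniformly over $\beta$ in a shrinking neighborhood.

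The paper sidesteps these entanglements by decomposing $U_i(\beta)$ not as $U_i(\beta_0)+\Delta_i$ but as the centered part $\mathbb{I}\{\varepsilon_i\leq G_i\}-F_\varepsilon(G_i\mid Z_i)$ plus the smooth part $F_\varepsilon(G_i\mid Z_i)-\tau$. This cleanly separates (a) a degenerate second-order $U$-process difference $W_{1n}^0(\beta)-W_{1n}^0(\beta_0)$ handled by Zheng's Lemma~1, (b) a cross term $W_{2n}^0(\beta)=(\beta-\beta_0)'\widetilde W_{2n}^0(\beta)$ whose Hoeffding pieces are controlled by Sherman's Corollary~4, and (c) a quadratic form $W_{3n}^0$ in $\beta-\beta_0$ that is trivially $O_P(n^{-1})$. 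Your decomposition mixes these three regimes inside $C_n$ and $Q_n$, which is why the bookkeeping of rates becomes unclear.
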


\subsection{Behavior under Local Alternatives}
\label{sec_loc_alt}

We now investigate the behavior of our test when $H_{0}$ does not
hold, and specifically we consider a sequence of local alternatives of
the form
\begin{equation}
{H}_{1n}:\ Y=g(Z;\beta_{0})+ r_{n}\delta (Z) + \varepsilon, \qquad F
\left( g (Z;\beta_{0}) \mid Z\right) = \tau
\, ,
\label{loc_alt}
\end{equation}%
where $r_n$, $n\geq 1$, is a sequence of real numbers tending to zero
and $\delta(Z)$ is a real-valued function satisfying
\begin{equation}
\mathbb{E}\left[ f_{\varepsilon}(0\mid Z\,)\delta (Z)
\frac{\partial}{\partial \beta}{g}(Z;\beta _{0})\right]
=0\quad \mbox{\rm and }\quad 0<\mathbb{E}[\delta ^{4}(Z)]<\infty
\, .
\label{conddelta}
\end{equation}
This condition ensures that our sequence of models (\ref{loc_alt})
does not belong to the null hypothesis $H_{0}$.  We do not impose any
smoothness restriction on the function $\delta (\cdot ) $ as is
frequent in this kind of analysis, see e.g. \cite{Zheng1998}.  As shown in
Lemma \ref{conv_alt} in the Proofs section, $\widehat{\beta }-\beta
_{0}=O_{\mathbb{P}}(n^{-1/2}+r_{n}^{2})$ under ${H}_{1n}$.  Our next
result states that these local alternatives can be detected whenever
$r_{n}^{2}n h^{1/2}\rightarrow \infty$. Hence our test \emph{does not
  suffer from the curse of dimensionality} against local alternatives,
since its power is unaffected by the number of regressors.
\begin{theor}
\label{l_power} Under Assumptions \ref{qas1} to \ref{K},  the test based on $T_{n}$ is
consistent against the sequence of alternatives ${H}_{1n}$
with $\delta (Z)$ satisfying (\ref{conddelta}) if $r_{n}^{2}nh^{1/2}
\rightarrow \infty$.
\end{theor}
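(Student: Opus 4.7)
My strategy is to show that under $H_{1n}$ the numerator $nh^{1/2}I_n(\widehat{\beta})$ acquires a deterministic drift of order $r_n^2 nh^{1/2}$, while the denominator $v_n$ stays bounded away from zero in probability, so that $T_n\to+\infty$ in probability as soon as $r_n^2 nh^{1/2}\to\infty$. The starting point is Lemma~\ref{conv_alt} (stated in the paper), giving $\widehat{\beta}-\beta_0 = O_{\mathbb{P}}(n^{-1/2}+r_n^2)$; here the orthogonality clause in~(\ref{conddelta}) is precisely what prevents $\widehat{\beta}$ from drifting away from $\beta_0$ under the local perturbation $r_n\delta$. Note also that $v_n^2$ depends only on the $Z_i$'s, so by the same arguments used in the proof of Theorem~\ref{as_law} one has $v_n\cvp v\in(0,\infty)$, and the denominator can be ignored from now on.

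The core of the argument is the analysis of $I_n(\beta_0)$ itself. Under $H_{1n}$, $U_i(\beta_0) = \mathbb{I}\{\varepsilon_i\leq -r_n\delta(Z_i)\}-\tau$, so Assumption~\ref{qas1}(c) and a Taylor expansion of $F_\varepsilon(\cdot\mid Z_i)$ around $0$ yield, uniformly in $Z_i$,
\[
\mathbb{E}[U_i(\beta_0)\mid Z_i] = -r_n\rho(Z_i) + O(r_n^2), \qquad \rho(Z):=f_\varepsilon(0\mid Z)\delta(Z).
\]
Taking the unconditional expectation and applying the substitution $W_2=W_1+hu$ gives $\mathbb{E}[I_n(\beta_0)] = r_n^2 J(h) + O(r_n^3)$ with $J(h)\to J$, where
\[
J = \int\!\!\int \rho(w,x_1)\rho(w,x_2)\,\psi(x_1-x_2)\,f(w,x_1)f(w,x_2)\,dw\,dx_1\,dx_2;
\]
the Fourier-positivity argument of Lemma~\ref{Fundamental-Lemma}, together with $\delta\not\equiv 0$ and $f_\varepsilon(0\mid\cdot)>0$, ensures $J>0$. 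A Hoeffding decomposition of the U-statistic $I_n(\beta_0)$ then writes it as $\mathbb{E}[I_n(\beta_0)]+L_n+R_n$, where $L_n$ is a linear Hajek projection whose summands have magnitude $O(r_n)$ (hence $L_n=O_{\mathbb{P}}(n^{-1/2}r_n)$), and $R_n$ is the degenerate quadratic part which, as under $H_0$, is $O_{\mathbb{P}}(n^{-1}h^{-1/2})$. Both are $o_{\mathbb{P}}(r_n^2)$ whenever $r_n^2 nh^{1/2}\to\infty$.

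The last ingredient is to replace $\beta_0$ by $\widehat{\beta}$ inside $I_n$. Combining a Taylor expansion of $F_\varepsilon$, the rate on $\widehat{\beta}$, and uniform bounds on the relevant $U$-processes through the VC condition of Assumption~\ref{qas2}(d), one obtains $nh^{1/2}[I_n(\widehat{\beta})-I_n(\beta_0)]=o_{\mathbb{P}}(nh^{1/2}r_n^2)$; the orthogonality condition in~(\ref{conddelta}) is what eliminates the otherwise leading cross term between the $r_n\delta(Z_i)$ drift and the linearisation $(\widehat{\beta}-\beta_0)^{\top}\partial g(Z_i;\beta_0)/\partial\beta$. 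Putting everything together,
\[
T_n = \frac{r_n^2 nh^{1/2}\,J\,(1+o_{\mathbb{P}}(1))}{v_n}\;\cvp\;+\infty
\]
whenever $r_n^2 nh^{1/2}\to\infty$, giving the claimed consistency. I expect this last replacement step to be the hardest: the indicators $U_i(\beta)$ are non-smooth in $\beta$, and the drift $r_n\delta$ interacts non-trivially with the score, so one has to show carefully that no spurious term of order $r_n\|\widehat{\beta}-\beta_0\|$ survives in $I_n(\widehat{\beta})-I_n(\beta_0)$ — which is possible only because of the orthogonality built into~(\ref{conddelta}).
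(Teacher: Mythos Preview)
Your overall strategy is correct and matches the paper's: identify a positive drift of order $r_n^2$ coming from the quadratic $\delta$--term, and show that every other contribution is $o_{\mathbb{P}}(r_n^2)$. The paper organises this slightly differently --- instead of first treating $I_n(\beta_0)$ by a Hoeffding decomposition and then bounding $I_n(\widehat\beta)-I_n(\beta_0)$, it expands $W_n(\widehat\beta)$ directly into seven pieces $W_{1n},\dots,W_{7n}$ via a Taylor expansion of $F_\varepsilon(\cdot\mid Z)$ around zero, with $W_{7n}$ playing the role of your drift $r_n^2 J$. The remaining pieces are then controlled by Hoeffding decompositions and the $U$-process bounds of Sherman (1994) and Zheng (1998). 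Your route and the paper's are equivalent reorderings of the same computation.

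One correction, though it does not create a gap. You locate a \emph{second} use of the orthogonality condition in the cross term of order $r_n\|\widehat\beta-\beta_0\|$ inside $I_n(\widehat\beta)-I_n(\beta_0)$. That term (the paper's $W_{6n}$) does \emph{not} vanish by~(\ref{conddelta}): the kernel $h^{-1}K_{h,ij}\psi_{ij}$ couples $Z_i$ and $Z_j$, so $\mathbb{E}[f_\varepsilon(0\mid Z)\delta(Z)\dot g(Z;\beta_0)]=0$ does not make it zero. Instead it is $O_{\mathbb{P}}\bigl(r_n(r_n^2+n^{-1/2})\bigr)=o_{\mathbb{P}}(r_n^2)$ purely by size, using $\widehat\beta-\beta_0=O_{\mathbb{P}}(r_n^2+n^{-1/2})$ from Lemma~\ref{conv_alt}; the same rate handles the pure-$\beta$ quadratic piece $O_{\mathbb{P}}(\|\widehat\beta-\beta_0\|^2)$. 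Orthogonality acts exactly once --- inside Lemma~\ref{conv_alt}, to kill the $O(r_n)$ drift of $\widehat\beta$, as you correctly say at the outset --- and that is enough. So your ``hardest step'' is actually simpler than you anticipate, though for a different reason than the one you give.
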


\subsection{Bootstrap Critical Values}
\label{sec_bootstrap}

The asymptotic approximation of the behavior of $T_{n}$ may not be
satisfactory in small samples as is customary in smoothing-based
lack-of-fit tests. This motivates the use of bootstrapping for
obtaining critical values.  The distribution of $T_{n}$ depends weakly
on the distribution of the error term $\varepsilon$, because
$\mathbb{I}\{Y\leq g(Z;\beta _{0})\}-\tau $ under $H_{0}$ is a
Bernouilli random variable irrespective of the particular distribution
of $\varepsilon$. The same phenomenon is noted by \cite{Horowitz2002} 
for their test statistic.  Their proposal is thus to
naively (or nonparametrically) bootstrap from the empirical
distribution of the  residuals.  This is a valid bootstrap
procedure when errors are identically distributed, and it remains
asymptotically valid for non identically distributed errors.  A first
possibility is thus to adopt naive residual bootstrap for our test.
Alternatively, \cite{He2003} note that one could use any
continuous distribution with the $\tau$-th quantile equal to 0.  This
constitutes a second possibility.  While asymptotically valid, these
two methods do not account for potential heteroscedastic errors.  Thus
a third possibility is the wild bootstrap method for quantile
regression introduced by \cite{Feng2011}.  The wild bootstrap
procedure for our test works as follows.
\begin{enumerate}
\item Let $\widehat \varepsilon_i = Y_i - g(Z_i; \widehat \beta)$,
  $1\leq i\leq n$, and $w_1,\cdots w_n$ be bootstrap weights generated
  independently from a two-point mass distribution with probabilities
  $1-\tau$ and $\tau$ at $2(1-\tau)$ and $-2\tau$.
Compute  $\varepsilon_{i}^{\ast }= w_i |\widehat \varepsilon_i|$ and
$Y_{i}^{\ast }=g(Z_{i};\widehat{\beta})+\varepsilon_{i}^{\ast }$
for each $i=1,...,n$.
\item Use the bootstrap data set $\left\{ Y_{i}^{\ast},Z_{i} :
  i=1,...,n\right\} $ to compute the estimator $
  \widehat{\beta}^{\ast}$, the new $U_{i}^\ast
  (\widehat{\beta}^{\ast})= \mathbb{I}\{Y_{i}^{\ast }\leq
  g(Z_{i};\widehat{\beta }^{\ast })\}-\tau $, and the new test
  statistic $T_{n}^{\ast }$.
\item Repeat Steps 1 et 2 many times, and estimate the
  $\alpha$-level critical value $z^{\ast}_{\alpha}$ by the $\left( 1-\alpha \right)$-th
  quantile of the empirical distribution of $T_{n}^{\ast}$.
\end{enumerate}
The bootstrap test  then rejects $H_{0}$ if $T_{n}\geq z^{\ast}_{\alpha }$.
%\begin{quote}
%Reject $H_{0}$ if $T_{n}\geq z^{\ast}_{\alpha }$.
%\end{quote}
Alternatively, one could resample residuals in Step 1 by naive
bootstrap, or obtain $\varepsilon_{i}^{\ast }$ by random draws from
e.g. a uniform law on the interval $[-\tau ,1-\tau ]$.  The following
theorem yields  the asymptotic validity of the bootstrap test.
\begin{theor}
\label{as_law_boot}
Under the conditions of Theorem \ref{as_law},
\begin{equation*}
\sup_{t\in \mathbb{R}}\left\vert \mathbb{P}\left( T_{n}^{\ast }\leq t\mid
Y_{1},Z_{1},...,Y_{n},Z_{n}\right) -\Phi (t)\right\vert \cvp 0
\, ,
\end{equation*}
where $\Phi \left( \cdot \right) $ is the standard normal distribution function.
\end{theor}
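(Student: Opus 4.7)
The plan is to mirror the proof of Theorem \ref{as_law} inside the bootstrap world, treating $\{(Y_i,Z_i)\}_{i=1}^n$ as fixed and the bootstrap weights $w_1,\dots,w_n$ as the only source of randomness. The first step is to verify that the bootstrap DGP satisfies the null hypothesis \emph{conditionally} on the data. By construction $\varepsilon_i^*=w_i|\widehat\varepsilon_i|$ with $w_i$ taking values $2(1-\tau)$ and $-2\tau$ with probabilities $1-\tau$ and $\tau$, so $\mathbb{P}^*(\varepsilon_i^*\leq 0 \mid Z_i)=\tau$ and hence $g(Z_i;\widehat\beta)$ is the conditional $\tau$-quantile of $Y_i^*$. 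Thus $\widehat\beta$ plays the role of the true parameter in the bootstrap universe, and the bootstrap analogues of Assumptions \ref{qas1}-\ref{qas2} hold, with high probability, once $\widehat\beta$ is close enough to $\beta_0$.

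Second, I would derive a Bahadur-type representation showing $\widehat\beta^*-\widehat\beta=O_{\mathbb{P}^*}(n^{-1/2})$ in probability. This replays the Koenker-Bassett asymptotics with $(Y_i^*,Z_i)$ in place of $(Y_i,Z_i)$; the essential inputs are the independence (conditional on the sample) of the $\varepsilon_i^*$, the fact that their conditional $\tau$-quantile given $Z_i$ is zero, and a bound on the conditional density of $\varepsilon_i^*$ near zero, which follows from Assumption \ref{qas1}(c) and the consistency of $\widehat\beta$. Uniform negligibility of the nonsmooth objective remainders is handled by Assumption \ref{qas2}(d) applied to the bootstrap empirical process.

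Third, decompose $nh^{1/2}I_n^*(\widehat\beta^*)/v_n^*$ exactly as in the proof of Theorem \ref{as_law}. The leading term is the degenerate second-order U-statistic
\[
\mathcal{U}_n^* = \frac{1}{n-1}\sum_{i\neq j} U_i^*(\widehat\beta)\,U_j^*(\widehat\beta)\,h^{-1}K_h(W_i-W_j)\,\psi(X_i-X_j),
\]
with $U_i^*(\widehat\beta)=\mathbb{I}\{Y_i^*\leq g(Z_i;\widehat\beta)\}-\tau$, while remainder terms induced by replacing $\widehat\beta$ by $\widehat\beta^*$ are controlled by the bootstrap analogue of the $U$-process bounds used for Theorem \ref{as_law}, via the VC property in Assumption \ref{qas2}(d). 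Note also that $v_n^{*2}$ differs from $v_n^2$ only through the prefactor $\tau^2(1-\tau)^2$, which remains the correct conditional variance of $U_i^*(\widehat\beta)$ since $U_i^*(\widehat\beta)$ is Bernoulli$(\tau)$-centered by construction; in particular $v_n^{*2}/v_n^2\cvp 1$.

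The main obstacle is the conditional central limit theorem for the degenerate U-statistic $\mathcal{U}_n^*$. Conditional on the sample, $\{U_i^*(\widehat\beta)\}_i$ are independent, mean-zero, bounded random variables with conditional variance $\tau(1-\tau)$, and the kernel $h^{-1}K_h(W_i-W_j)\psi(X_i-X_j)$ is a fixed (non-random) symmetric function of $(Z_i,Z_j)$. I would apply de Jong's CLT for degenerate quadratic forms (or equivalently the martingale CLT used in \cite{Zheng1998}-style arguments) conditionally on the sample: verify (i) the normalized sum of squared conditional variances tends to $1$, which is essentially the statement $v_n^{*2}/v_n^2\cvp 1$ already noted, and (ii) a Lindeberg/$G_I,G_{II},G_{IV}$ negligibility condition on the kernel, which reduces to showing that $\mathbb{E}[h^{-2}K_h^2(W_1-W_2)\psi^2(X_1-X_2)]$ and its higher-order analogues are of the same orders as in the proof of Theorem \ref{as_law}. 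Since these moment bounds depend only on the $Z_i$'s, they transfer directly from the unconditional analysis, and the conditional CLT follows in probability, yielding the stated uniform convergence to $\Phi$.
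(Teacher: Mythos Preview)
Your overall architecture matches the paper's: establish $\sqrt{n}$-consistency of $\widehat\beta^*-\widehat\beta$ given the sample, show $nh^{1/2}\{W_n^*(\widehat\beta^*)-W_n^*(\widehat\beta)\}$ is conditionally negligible by replaying the decomposition of Theorem~\ref{as_law}, and apply a martingale CLT to the leading degenerate $U$-statistic $nh^{1/2}W_n^*(\widehat\beta)/v_n$. The paper does exactly this, using Hall--Heyde for the last step (Lemma~\ref{hall_heyde}), which is equivalent to your de~Jong/martingale suggestion.

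There are, however, two genuine gaps. First, and most substantively: when you write that the remainder terms are ``controlled by the bootstrap analogue of the $U$-process bounds used for Theorem~\ref{as_law}, via the VC property,'' you are glossing over the main technical difficulty. Conditionally on the sample, the design points $Z_1,\dots,Z_n$ are \emph{fixed}, not i.i.d., so Sherman's Corollary~4 and Zheng's Lemma~1 do not apply as stated---those results are for $U$-processes built from i.i.d.\ observations. The paper addresses this by proving a new conditional moment inequality (Lemma~\ref{sher}): a version of Sherman's Main Corollary in which $z_1,\dots,z_n$ are held fixed and only the $u_i$'s are random, established via a tailored symmetrization with ghost variables $u_{n+1},\dots,u_{2n}$ paired to the same fixed $z_i$'s. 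This lemma is then used to obtain conditional versions of both Sherman's and Zheng's bounds. Without it, the $U$-process control you invoke is unjustified.

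Second, your Bahadur-representation step asserts ``a bound on the conditional density of $\varepsilon_i^*$ near zero, which follows from Assumption~\ref{qas1}(c).'' For the wild bootstrap this is false: conditionally on the sample, $\varepsilon_i^*=w_i|\widehat\varepsilon_i|$ is a two-point random variable and has no density; its distribution function is flat in a neighborhood of zero. The usual Koenker--Bassett linearization therefore does not go through directly. The paper handles the uniform-bootstrap case explicitly (where $F_{\varepsilon^*}(\cdot\mid\overline Z)$ is genuinely uniform on $[-\tau,1-\tau]$) and, for the wild bootstrap, defers to Theorem~1 of \cite{Feng2011}, noting that their argument extends to nonlinear $g$ via the linearization in Lemma~\ref{conv_alt}. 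You would need to invoke or reproduce that argument rather than appeal to a nonexistent density.
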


\section{Numerical Evidence}
\label{simul_sec}

\subsection{Small Sample Performances}

We investigated the performances of our procedure for testing
lack-of-fit of a linear median regression for two setups considered by
\cite{He2003}, namely
\begin{eqnarray}
Y & = & 1+W+X+\delta \, \left(W^{2}+WX+X^{2}\right)+\varepsilon
\, ,
\label{eq:setup1}\\
Y & = & \delta \, \log\left(1+W^{2}+X^{2}\right)+\varepsilon
\, ,
\label{eq:setup2}
\end{eqnarray}
where $W$ follows a standard normal, and $X$ independently follows a
binomial of size $5$ and probability of success $0.5$.  For the error
term, we considered the three distributions
$\mathcal{N}\left(0,\,1\right)$, $\log\mathcal{N}\left(0,\,1\right)-1$
and $\mathcal{N}\left(0,\,\left(1+W^{2}\right)/2\right)$.

For implementation, we chose $\psi(\cdot)$ as the standard normal density
and $K(\cdot)$ as triangle density with variance one.
%{\tt What are $K$ and $\psi$?}
We set $\delta=0$ in Model (\ref{eq:setup1}) to evaluate the
comparative performances of the three possible bootstrapping
procedures.  Figure \ref{fig:Level} reports ou results based on $5000$
replications for a sample size of $n=100$ at nominal level $10\%$,
when the bandwidth is $h= c n^{-1/5}$ with $c$ varying.
%{\tt Is it  the formula you used?}
The three bootstrap methods yield accurate
levels for any bandwidth choice when errors are identically
distributed, while the use of asymptotic critical values yield large
underrejection.  In the heteroscedastic case, however, only the wild
bootstrap yield an empirical level close to $10\%$, while the use of
naive or uniform bootstrap results in a severely oversized test.

Next, we investigated the power of our test for Models
(\ref{eq:setup1}) and (\ref{eq:setup2}) with either standard gaussian
or heteroscedastic gaussian errors. We compared our test to the one
proposed by He and Zhu (2003, hereafter HZ), based on
\[
\max_{\left\Vert \mathbf{a}\right\Vert = 1}
n^{-1}\sum_{i=1}^{n}\left(\mathbf{a}^{\prime}\mathbf{R}_{n}\left(X_{i}\right)\right)^{2}
\quad \mbox{where } \quad \mathbf{R}_{n}\left(\mathbf{t}\right)=
n^{-1/2}\sum_{j=1}^{n} \left( \tau - \mathbb{I}\left[ U_{j} \left(
  \widehat{\beta} \right) <0\right] \right) Z_{j}
\mathbb{I}\left(Z_{j}\leq\mathbf{t}\right)
\, .
\]
We also computed the statistic proposed by \cite{Zheng1998}, which in our
setup writes
\[
\dfrac{h^{q/2}}{ \widetilde{\sigma} (n-1)} \sum_{j\neq i}
U_{i}\left(\widehat{\beta}\right)U_{j}\left(\widehat{\beta}\right)
h^{-q} \tilde{K}\left(\dfrac{W_{i}-W_{j}}{h},\,\dfrac{X_{i}-X_{j}}{h}\right)
\]
where $\widetilde{\sigma}^{2}= \frac{2
  \tau^{2}\left(1-\tau\right)^{2}}{n(n-1)} \sum_{j\neq i} h^{-q}
\tilde{K}^{2}\left(\dfrac{W_{i}-W_{j}}{h},\,\dfrac{X_{i}-X_{j}}{h}\right)$,
and $\tilde{K}$ is a triangle kernel applied to the norm of its argument.  We
apply the wild bootstrap procedure to compute the critical values of
all tests.  Figure \ref{fig:Power} reports power curves of the
different tests as a function of $\delta$ based on $2500$
replications. For the linear Model (\ref{eq:setup1}), all tests
perform almost similarly.  Our test is a bit more powerful, especially
for a larger bandwidth, which was expected given our theoretical
analysis.  For the nonlinear Model (\ref{eq:setup2}), the power
advantage of our test is more pronounced.  Its power can be as large
as twice the power of the test by \cite{He2003}.

\subsection{Empirical Illustration}

We studied some parametric quantile models for children birthweight
using data analyzed by \cite{Abrevaya2001} and \cite{Koenker2001},
who gave a detailed data description.  We focused on median
regression and the 10th percentile quantile regression.  Models are
estimated and tested on a subsample of 1168 smoking college graduate
mothers.  We first analyzed the simple model considered by \cite{He2003}, 
which is linear in weight gain during pregnancy (WTGAIN),
average number of cigarettes per day (CIGAR), and age (AGE). When
implementing our test, we chose age as the $W$ variable, and we
standardize all explanatory variables. Other details are identical to
what was done in simulations.  For both quantiles, HZ test does not
reject this specification.  Our test does not reject the linear median
regression at 10\% level, but detects misspecification for the lower
decile regression when $c=2$.

Since the more detailed analysis of \cite{Abrevaya2001} and \cite{Koenker2001} 
suggests that birthweight is quadratic in age, we then
considered this variation. None of the tests detects a misspecified
model.  Finally, we considered a more complete model similar to \cite{Abrevaya2001}, 
where we added the explanatory binary variables BOY (1 if
child is male), BLACK (1 if mother is black), MARRIED (1 if married),
and NOVISIT (1 if no prenatal visit during the pregnancy).  HZ test does
not reject the model at either quantiles.  Our test however indicates
a misspecified median regression model at 10\% level, while it does
not reject the model for the lower decile. 
Our limited empirical exercice suggests that our new test, beside existing 
procedures such as the test by \cite{He2003},  is a valuable 
addition to the practitioner toolbox.

%bibliography1
\small
\bibliographystyle{ecta}
\bibliography{ManuscritAbre}
\normalsize
%bibliography2

\normalsize
\newpage

\section{Proofs}
\setcounter{equation}{0} \setcounter{appen}{0}

We first recall some definitions. For the definition of a VC-class, we refer to Section 2.6.2  of \cite{Vaart1996}. Next, let $\mathcal{G}$ be a class of
real-valued functions on a set $\mathcal{S}$. We call $\mathcal{G}$ an \emph{%
Euclidean(c,d)} family of functions, or simply \emph{Euclidean}, for the envelope $G$ if there
exists positive constants $c$ and $d$ with the following properties: if $%
0<\epsilon \leq 1$ and $\lambda $ is a measure for which $\int G^{2}d\lambda
<\infty $, then there are functions $g_{1},\dots ,g_{j}$ in $\mathcal{G}$
such that (i) $j\leq c\epsilon ^{-d}$; and (ii) for each $g$ in $\mathcal{G}$
there is an $g_{i}$ with $\int |g-g_{i}|^{2}d\lambda \leq \epsilon ^{2}\int
G^{2}d\lambda $. The constants $c$ and $d$ must not depend on $\lambda $.
See e.g. \cite{Nolan1987} or \cite{Sherman1994}. Recall that if $\mathcal{F}$
is a VC-class of functions then the class $\{\mathbb{I}\{f\geq 0\}:f\in
\mathcal{F}\}$ is Euclidean for the envelope $F\equiv 1$, see \cite{Vaart1996} 
Lemma 2.6.18(iii) and Theorem 2.6.7 or  \cite{Pakes1989}.
Bellow, we shall use this property with the VC-classes of functions of
$\{\varepsilon + g(Z,\beta_0) - g(Z,\beta) : \beta \in B\}$
and $\{\varepsilon + g(Z,\beta_0) + r_n\delta(Z) - g(Z,\beta) : \beta \in B\}$.

In the following, $F_{\varepsilon}\left( \cdot \mid x\right) $ is the conditional
distribution function of $\varepsilon$ given $Z=z;$ that means  $F_{\varepsilon}\left( 0 \mid \cdot \right) \equiv \tau $.
Below $C$, $C_{1}$, $C_{2}$,... denote constants, not
necessarily the same as before and possibly changing from line to line.

\subsection{Proof of Theorem \protect\ref{as_law}}

%\bigskip

\begin{proof}%[\empty]
First, we prove that if $\mathcal{H}_{0}$ holds%
\begin{equation}
n\sqrt{h}\left\{ W_{n}(\widehat{\beta })-W_{n}(\beta _{0})\right\}
=o_{\mathbb{P}}\left( 1\right) .  \label{hrte}
\end{equation}%
Let us introduce some simplifying notation:
\begin{equation}
G_{i}\left( \beta ,\beta _{0}\right) =g(Z_{i};\beta
)-g(Z_{i};\beta _{0}),\,\,\,\psi_{ij}=\psi(X_{i}-X_{j}),\,\,\,K_{h,ij}=
K_{h}\left( W_{i}-W_{j}\right) .
\label{simp_not}
\end{equation}%
Under $\mathcal{H}_{0}$%
\begin{eqnarray*}
W_{n}(\beta ) &=&\frac{h^{-1}}{n(n-1)}\sum\limits_{j\neq i}\left[ \mathbb{I}%
\{Y_{i}\leq g(Z_{i};\beta )\}-\tau \right] \left[ \mathbb{I}\{Y_{j}\leq
g(Z_{j};\beta )\}-\tau \right] K_{h,ij} \psi_{ij}\\
&=&\frac{h^{-1}}{n(n-1)}\sum\limits_{j\neq i}\left[ \mathbb{I}\{\varepsilon_{i}\leq
G_{i}(\beta ,\beta _{0})\}-F_{\varepsilon}\left( 0\mid Z_{i}\right) \right]  \\
&&\qquad \qquad \qquad \qquad \qquad \times \left[ \mathbb{I}\{\varepsilon_{j}\leq
G_{j}(\beta ,\beta _{0})\}-F_{\varepsilon}\left( 0\mid Z_{j}\right) \right] K_{h,ij}\psi_{ij}.
\end{eqnarray*}%
By a Taylor expansion, decompose
\begin{equation*}
F_{\varepsilon}\left( 0\mid Z_{i}\right) =F_{\varepsilon}\left( G_{i}(\beta ,\beta _{0})\mid
Z_{i}\right) -f_{\varepsilon}\left( 0\mid Z_{i}\right) \dot{g}^{\prime }(Z_{i};\beta
_{0})\left( \beta -\beta _{0}\right) +O_{\mathbb{P}}\left( \left\Vert \beta -\beta
_{0}\right\Vert ^{2}\right) .
\end{equation*}%
We can write $W_{n}(\beta ) - W_{n}(\beta_0 ) = \{W_{1n}^{0}(\beta ) -  W_{1n}^{0}(\beta_0 )\}  + 2W_{2n}^{0}(\beta
)+W_{3n}^{0}(\beta )+R_{n}^{0}$ where
\begin{multline*}
W_{1n}^{0}(\beta )=\frac{h^{-1}}{n(n-1)}\!\sum\limits_{j\neq i}\left[
\mathbb{I}\{\varepsilon_{i}\leq G_{i}(\beta ,\beta _{0})\}-F_{\varepsilon}\left( G_{i}(\beta
,\beta _{0})\mid Z_{i}\right) \right]  \\
\times \left[ \mathbb{I}\{\varepsilon_{j}\leq G_{j}(\beta ,\beta _{0})\}-F_{\varepsilon}\left(
G_{j}(\beta ,\beta _{0})\mid Z_{j}\right) \right] K_{h,ij}\psi_{ij}
\end{multline*}%
$W_{2n}^{0}(\beta )=\left( \beta -\beta _{0}\right) ^{\prime }\widetilde{W}%
_{2n}^{0}(\beta )$ with
\begin{eqnarray*}
\widetilde{W}_{2n}^{0}(\beta ) &=&\frac{h^{-1}}{n(n-1)}\sum\limits_{j\neq i}%
\left[ \mathbb{I}\{\varepsilon_{i}\leq G_{i}(\beta ,\beta _{0})\}-F_{\varepsilon}\left(
G_{i}(\beta ,\beta _{0})\mid Z_{i}\right) \right]  \\
&&\qquad \qquad \qquad \qquad \times f_{\varepsilon}\left( 0\mid Z_{j}\right) \dot{g}%
(Z_{j};\beta _{0})K_{h,ij}\psi_{ij},
\end{eqnarray*}%
$W_{3n}^{0}(\beta )=\left( \beta -\beta _{0}\right) ^{\prime }\widetilde{W}%
_{3n}^{0}\left( \beta -\beta _{0}\right) $ with
\begin{equation*}
\widetilde{W}_{3n}^{0}=\frac{h^{-1}}{n(n-1)}\sum\limits_{j\neq i}f_{\varepsilon}\left(
0\mid Z_{i}\right) \dot{g}(Z_{i};\beta _{0})\dot{g}^{\prime }(Z_{j};\beta
_{0})f_{\varepsilon}\left( 0\mid Z_{j}\right) K_{h,ij}\psi_{ij}=O_{\mathbb{P}}(1).\
\end{equation*}%
The rate of $\widetilde{W}_{3n}^{0}$ follows simply by computing its mean and variance.
By Assumption \ref{qas1}(c) and Assumption \ref{qas2}(c)  it is easy to check that $\left\vert R_{n}^{0}\right\vert \leq
\left\Vert \beta -\beta _{0}\right\Vert ^{2}O_{\mathbb{P}}\left( 1\right) .$ For
deriving the order of $\widetilde{W}_{2n}^{0},$ apply
Hoeffding decomposition   and write $h\widetilde{W}_{2n}^{0}(\beta
)=V_{n}^{2}(\beta )+V_{n}^{1}(\beta )$ with $V_{n}^{1}$, $V_{n}^{2}$
degenerate $U-$processes or order 1 and 2, respectively. In view of
Assumptions \ref{qas2}(d) and \ref{K}(a), apply Corollary 4 of \cite{Sherman1994} and deduce that $%
V_{n}^{2}(\beta )=O_{\mathbb{P}}\left( n^{-1}\right) $ uniformly in $\beta $ (and $h$).
Next,
if $\dot{g}^{\left( l\right) }$ denotes the $l$th component of the vector of
first-order derivatives $\dot{g},$ $1\leq l\leq p,$ and
\begin{equation*}
\pi ^{\left( l\right) }\left( Z_{i} \right) =\mathbb{E}\left[  f_{\varepsilon}\left( 0\mid Z_{j}\right) \dot{g}^{\left( l\right)
}(Z_{j};\beta _{0})
h^{-3/4}K_{h,ij} \psi_{ij} \mid Z_{i}\right]
\end{equation*}%
we can rewrite the $l$th component of the vector $V_{n}^{1}(\beta )$ as%
\begin{equation*}
\frac{h^{3/4}}{n}\sum\limits_{i=1}^{n}\left[ \mathbb{I}\{\varepsilon_{i}\leq
G_{i}(\beta ,\beta _{0})\}-F_{\varepsilon}\left( G_{i}(\beta ,\beta _{0})\mid
Z_{i}\right) \right]\pi ^{\left( l\right) }\left(
Z_{i}\right) .
\end{equation*}%
By H\"{o}lder inequality, Assumption \ref{qas1}(c), Assumption \ref{qas2}(c)
and a change of variables,
\begin{eqnarray*}
\left\vert \pi ^{\left( l\right) }\left( X_{i} \right) \right\vert
&\leq& \mathbb{E}\left[  f_{\varepsilon}\left( 0\mid
Z_{j}\right) \left\vert\dot{g}^{\left( l\right) }(Z_{j};\beta _{0}) \right\vert h^{-3/4}K_{h,ij} \left\vert\psi_{ij} \right\vert\mid Z_{i} \right]   \\
&\leq & C_1 \mathbb{E}^{1/4}\left[ A^{4}( Z_{j} )\right] \mathbb{E}^{3/4}\left[
h^{-1}K_{h,ij}^{4/3}\mid Z_{i} \right]
\\
& \leq & C_2,
\end{eqnarray*}
for any $1\leq l\leq p.$
Now, by Corollary 4 of \cite{Sherman1994}, $h^{-3/4}V_{n}^{1}(\beta
)=O_{\mathbb{P}}\left( n^{-1/2}\right) $ uniformly in $\beta .$ Deduce that%
\begin{equation*}
\sup_{\beta }|W_{2n}^{0}(\beta )|\leq \left\Vert \beta -\beta
_{0}\right\Vert O_{\mathbb{P}}\left( h^{-1}n^{-1}+h^{-1/4}n^{-1/2}\right) .
\end{equation*}%
Finally, by Lemma 1 of \cite{Zheng1998}, for any $\alpha \in (0,1)$%
\begin{equation*}
\sup_{\beta }|W_{1n}^{0}(\beta )-W_{1n}^{0}(\beta _{0})|=O_{\mathbb{P}}\left(
h^{-1}n^{-1-\alpha /4}\right)
\end{equation*}%
uniformly over $O_{\mathbb{P}}\left( n^{-1/2}\right) $ neighborhoods of $\beta _{0}.$
Gathering the results and using Lemma \ref{conv_alt} with $\delta(\cdot)\equiv 0$ we obtain (\ref{hrte}). Now, it remains to check that $%
nh^{1/2}W_{n}(\beta _{0})/v_{n}$ converges in law to a standard normal
distribution. This result easily follows as a particular case of Lemma \ref{hall_heyde} below.
\end{proof}

\subsection{Proof of Theorem \protect\ref{l_power}}

First, we derive the behavior of $\widehat{\beta },$ the estimator of $\beta
_{0}$ under the sequence of local alternatives $H_{\!1n}.$
\begin{lem}
\label{conv_alt} Suppose that  Assumptions \ref{qas1}, \ref{qas2} hold,
 let $\delta(\cdot)$ be a function such that Condition
(\ref{conddelta}) holds, and let $r_n$, $n\geq 1$ be a sequence of real
  numbers such that $r_n\rightarrow 0$. If $\widehat{\beta }=\arg
\min_{\beta \in B}\!\Gamma _{n}\left( \beta \right) $ with $\Gamma
_{n}\left( \beta \right) =\!\!\sum_{i=1}^{n}\rho _{\tau
}(Y_{i}-\!g(Z_{i};\beta ))$, then under
\negthinspace $\mathcal{H}_{0},$ $\widehat{\beta }-\!\beta _{0}\!=\!O_{\mathbb{P}}(n^{-1/2})$
and under $H_{\!1n}$ defined in (\ref{loc_alt}), $\widehat{\beta }-\beta
_{n}=O_{\mathbb{P}}(n^{-1/2})$ where
\begin{equation*}
\beta _{n}=\beta _{0}-r_{n}^{2}\left[ \mathbb{E}\left[ f_{\varepsilon}(0\mid Z\,)\dot{g%
}(Z;\beta _{0})\dot{g}^{\prime }(Z;\beta _{0})\right] \right] ^{-1}\mathbb{E}%
\left[ f_{\varepsilon}^{\,\prime }(0\mid Z)\delta ^{2}(Z)\dot{g}(Z;\beta _{0})%
\right] .
\end{equation*}
\end{lem}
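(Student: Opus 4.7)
The plan is to treat the two cases simultaneously by viewing $H_0$ as the degenerate case $r_n \equiv 0$, and to follow the classical M-estimation script for convex loss functions (Pollard, 1991; Hjort and Pollard, 1993), adapted to a moving pseudo-true parameter $\beta_n$ under local alternatives.

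\textbf{Step 1 (Identification of $\beta_n$).} Under $H_{1n}$ the population criterion is $M_n(\beta) = \mathbb{E}[\rho_\tau(\varepsilon + r_n\delta(Z) + g(Z;\beta_0) - g(Z;\beta))]$, with gradient
$$\nabla M_n(\beta) = -\mathbb{E}\!\left[\dot{g}(Z;\beta)\bigl(\tau - F_\varepsilon(g(Z;\beta) - g(Z;\beta_0) - r_n\delta(Z)\mid Z)\bigr)\right].$$
Using $F_\varepsilon(0\mid Z) = \tau$, a second-order Taylor expansion of $F_\varepsilon$ near zero (justified by Assumption 2.1(c)), and the ansatz $\beta_n - \beta_0 = O(r_n^2)$, I would set $\nabla M_n(\beta_n) = 0$ and solve. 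The linear-in-$r_n$ contribution is exactly the orthogonality condition (\ref{conddelta}) and vanishes, so the leading remaining terms are $\mathbb{E}[f_\varepsilon(0\mid Z)\dot g\dot g'](\beta_n - \beta_0)$ and an $r_n^2$-term involving $f_\varepsilon'(0\mid Z)\delta^2(Z)\dot g$; inverting the Hessian (Assumption 2.2(b)) yields the stated formula.

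\textbf{Step 2 (Consistency).} Since $r_n \to 0$, $M_n(\beta)$ converges uniformly on $B$ to $M(\beta) = \mathbb{E}[\rho_\tau(\varepsilon + g(Z;\beta_0) - g(Z;\beta))]$ by Assumption 2.2(c); moreover $\{\rho_\tau(Y - g(Z;\beta)): \beta \in B\}$ is a Glivenko--Cantelli class by the VC hypothesis in Assumption 2.2(d) and the envelope condition. The classical argmin consistency argument then gives $\widehat\beta \stackrel{p}{\to} \beta_0$, hence $\widehat\beta - \beta_n = o_{\mathbb{P}}(1)$ since $\beta_n \to \beta_0$.

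\textbf{Step 3 ($\sqrt{n}$-rate via Knight's identity and the convexity lemma).} Writing $e_{i,n} = Y_i - g(Z_i;\beta_n)$ and reparametrising $\beta = \beta_n + \alpha/\sqrt n$, the smoothness in Assumption 2.2(c) lets me linearise $g$ in $\beta$ up to a uniformly $o_{\mathbb{P}}(1)$ error, reducing the analysis to the convex-in-$\alpha$ process
$$Z_n(\alpha) = \sum_{i=1}^n\Bigl\{\rho_\tau\bigl(e_{i,n} - \dot g'(Z_i;\beta_n)\alpha/\sqrt n\bigr) - \rho_\tau(e_{i,n})\Bigr\}.$$
Knight's identity $\rho_\tau(u - v) - \rho_\tau(u) = -v\psi_\tau(u) + \int_0^v [\mathbb{I}\{u\leq s\} - \mathbb{I}\{u\leq 0\}]ds$, with $\psi_\tau(u) = \tau - \mathbb{I}\{u<0\}$, decomposes $Z_n(\alpha) = -\alpha'S_n + Q_n(\alpha)$, where $S_n = n^{-1/2}\sum \dot g(Z_i;\beta_n)\psi_\tau(e_{i,n})$ has mean of order $o(1)$ by the defining property of $\beta_n$ (and, under $H_0$, exactly zero), and finite variance, so $S_n = O_{\mathbb{P}}(1)$ by a standard CLT. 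A uniform LLN argument, grounded in the Euclidean/VC machinery recalled at the start of the Proofs section, shows $Q_n(\alpha) \stackrel{p}{\to} \tfrac12 \alpha' V\alpha$ pointwise, with $V = \mathbb{E}[f_\varepsilon(0\mid Z)\dot g\dot g']$ (finite and nonsingular). Convexity of $Z_n(\alpha)$ then upgrades this to uniform convergence on compacts via the Hjort--Pollard lemma, yielding $\sqrt n(\widehat\beta - \beta_n) = V^{-1}S_n + o_{\mathbb{P}}(1) = O_{\mathbb{P}}(1)$.

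The main obstacle is the empirical-process analysis of $Q_n(\alpha)$ under drifting errors $e_{i,n}$: we need stochastic equicontinuity that is not disturbed by the $r_n\delta(Z)$ perturbation. The VC class property in Assumption 2.2(d), together with the Hölder continuity of $f_\varepsilon'$ in Assumption 2.1(c), makes the $o_{\mathbb{P}}(1)$ bounds in the quadratic approximation uniform in $n$; since $r_n \to 0$, the drift only contributes to lower-order terms and does not alter the leading Hessian $V$.
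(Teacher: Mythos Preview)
Your Step~3 has a genuine gap. The convexity-lemma route (Pollard 1991, Hjort--Pollard) requires the \emph{sample} criterion $\alpha \mapsto \Gamma_n(\beta_n + \alpha/\sqrt n)$ to be convex in $\alpha$. Since $g(\cdot;\beta)$ is a general nonlinear regression function here, $\rho_\tau(Y - g(Z;\beta))$ is not convex in $\beta$, and this fails. You try to sidestep the issue by first linearising $g$ and then invoking convexity of the resulting $Z_n(\alpha)$, but $Z_n$ is not the objective minimised by $\widehat\beta$: the argmin of $Z_n$ is close to that of the true criterion only on regions where the linearisation remainder is uniformly $o_{\mathbb P}(1)$, and to confine attention to such a region you would already need $\sqrt n(\widehat\beta - \beta_n) = O_{\mathbb P}(1)$, which is precisely the conclusion. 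Without an independent tightness step (a preliminary rate via a modulus-of-continuity bound, say), the deduction is circular.

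The paper avoids convexity altogether and works at the level of the score. From the directional-derivative first-order condition at $\widehat\beta$ it extracts
\[
\frac{1}{\sqrt n}\sum_{i=1}^n \psi_\tau\bigl(Y_i - g(Z_i;\widehat\beta)\bigr)\dot g(Z_i;\widehat\beta) = o_{\mathbb P}(1),
\]
and separately establishes stochastic equicontinuity of the centred score process $\nu_n(\beta)$ using Euclidean-class arguments (Pakes--Pollard Lemma~2.13 for the envelope, Sherman's Corollary~8 for the equicontinuity), with constants uniform in $n$ so that the $r_n\delta(Z)$ drift causes no trouble. Combining $\nu_n(\widehat\beta) - \nu_n(\beta_0) = o_{\mathbb P}(1)$ with a second-order Taylor expansion of $F_\varepsilon(\cdot\mid Z)$ in the conditional-mean part then yields the rate and the formula for $\beta_n$ in one stroke. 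This score-based approach is the standard device for non-convex $M$-estimation and is what you need here; alternatively, inserting a rate theorem of the van der Vaart--Wellner~3.2.5 type before your Knight-identity step would also close the gap.
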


\begin{proof} It is easy to check that
\begin{equation}
\left\vert \rho _{\tau }\left( a-b\right) -\rho _{\tau }\left( a\right)
\right\vert \leq \left\vert b\right\vert \max \left( \tau ,1-\tau \right)
\leq \left\vert b\right\vert .  \label{h_ineq}
\end{equation}%
Combine this with the Mean Value Theorem and Assumption \ref{qas2}(c) to
check the conditions of Lemma 2.13 of \cite{Pakes1989} and to derive
the Euclidean property for an integrable envelope for the family of
functions $\left\{ \left( y,z\right) \!\mapsto \!\rho _{\tau }(y-\!g(z;\beta
))\!:\beta \in B\right\} .$

Next, we study the consistency  of $\widehat \beta$ under $H_0$. By the uniform
law of large numbers,  $\sup_{\beta }\left\vert
n^{-1}\Gamma _{n}\left( \beta \right) -\mathbb{E}\left[ \rho _{\tau
}(Y-g(Z ;\beta ))\right] \right\vert \rightarrow 0,$ in probability
(use for instance Lemma 2.8 of Pakes and Pollard 1989).\ This uniform
convergence, the identification condition in Assumption \ref{qas2}(a), the
continuity of $g\left( z;\cdot \right) $ for any $z,$ and usual arguments
used for proving consistency of argmax estimators, allow to deduce $\widehat{%
\beta }-\beta _{0}=o_{\mathbb{P}}(1).$ To obtain the consistency
under the local alternatives approaching $H_0$, it suffices to prove $\sup_{\beta \in
B}\left\vert \Delta _{n}\left( \beta \right) \right\vert \rightarrow 0$ in
probability, where
\begin{equation*}
\Delta _{n}\left( \beta \right) =\frac{1}{n}\sum_{i=1}^{n}\left\{ \rho
_{\tau }\left( l(\varepsilon_{i},Z_{i};\beta )+r_{n}\delta (Z_{i})\right) -\rho _{\tau
}\left( l(\varepsilon_{i},Z_{i};\beta )\right) \right\}
\end{equation*}%
and $l(u,z;\beta )=u+g(z;\beta _{0})-g(z;\beta ).$ By inequality
(\ref{h_ineq}),
\begin{equation*}
\left\vert \Delta _{n}\left( \beta \right) \right\vert \leq \frac{\left\vert
r_{n}\right\vert }{n}\sum_{i=1}^{n}\left\vert \delta (Z_{i})\right\vert .
\end{equation*}%
Consequently, $ \Delta _{n}\left( \beta \right) = o_{\mathbb{P}} (1)$ uniformly over $\beta \in B$, and thus the consistency follows.

Define $\psi _{\tau }(e)=\tau -\mathbb{I}(e<0)$ as the derivative of $\rho
_{\tau }.$ To obtain the rate of convergence of $\widehat{\beta }$ under $\mathcal{H}_{1n}$
(in particular under $H_0$ by taking $r_n \equiv 0$) consider the empirical process
\begin{multline*}
\nu _{n}\left( \beta \right) = \frac{1}{\sqrt{n}}\sum_{i=1}^{n}\left\{ \psi
_{\tau }\left( Y_{i} - g(Z_{i};\beta ) \right) - \mathbb{E} [ \psi
_{\tau }\left( Y_{i} - g(Z_{i};\beta ) \right) \mid Z_{i} ] \right\} \dot{g}(Z_{i};\beta ) \\
=\frac{1}{\sqrt{n}}\sum_{i=1}^{n}\left\{ \psi
_{\tau }\left( l(\varepsilon_{i},Z_{i};\beta )+r_{n}\delta (Z_{i})\right)
-\mathbb{E}\left[ \psi _{\tau }\left( l(\varepsilon_{i},Z_{i};\beta
)+r_{n}\delta (Z_{i})\right) \mid Z_{i}\right] \right\} \dot{g}(Z_{i};\beta )
\end{multline*}%
indexed by $\beta .$ First, let us notice that
\begin{equation}
\nu _{n}\left( \beta \right) -\nu _{n}\left( \beta _{0}\right) =o_{\mathbb{P}}\left(
1\right)   \label{negli1}
\end{equation}%
uniformly over $o_{\mathbb{P}}\left( 1\right) $ neighborhoods of $\beta _{0},$ as a
consequence of Corollary 8 of \cite{Sherman1994}. Indeed, by Lemma 2.13 of \cite{Pakes1989}, 
the class of functions $\{\dot{g}(\cdot ;\beta ):\beta
\in B\}$ is Euclidean for a squared integrable envelope. Next, by the
VC-class property of the regression functions $\{g(\cdot ;\beta )$, $\beta
\in B\}$, the class of functions $\{(u,z)\mapsto \psi _{\tau }\left(
l(u,z;\beta )+r_{n}\delta (z)\right) :\beta \in B\}$ is \emph{Euclidean(c,d)}
for a constant envelope. See Lemma 2.12 of \cite{Pakes1989}.
Moreover, the constants $c$ and $d$ can be taken independent of $n$, see,
for instance, the proof of Lemma 2.6.18(v) of \cite{Vaart1996}. 
Finally, by repeated applications of the Mean Value Theorem and
Assumptions \ref{qas1}(c) and \ref{qas2}(c), for any $z,\beta _{1},\beta _{2}
$ we have
\begin{eqnarray}
&&\hskip-1cm\left\vert \,\mathbb{E}\left[ \psi _{\tau }\left( l(\varepsilon,z;\beta
_{1})+r_{n}\delta (z)\right) \right] -\mathbb{E}\left[ \psi _{\tau }\left(
l(\varepsilon,z;\beta _{2})+r_{n}\delta (z)\right) \right] \,\right\vert   \label{eq1}
\\
&\leq &\left\vert F_{\varepsilon}\left( g(z;\beta _{1})-g(z;\beta _{0})-r_{n}\delta
(z)\mid z\right) -F_{\varepsilon}\left( g(z;\beta _{2})-g(z;\beta _{0})-r_{n}\delta
(z)\mid z\right) \right\vert   \notag \\
&\leq &f_{\varepsilon}(v_{n}\mid z)\left\vert g(z;\beta _{1})-g(z;\beta
_{2})\right\vert   \notag \\
&\leq &CA\left( z\right) \left\Vert \beta _{1}-\beta _{2}\right\Vert   \notag
\end{eqnarray}%
for some $v_{n}$ between $g(z;\beta _{1})-g(z;\beta _{0})-r_{n}\delta (z)$
and $g(z;\beta _{2})-g(z;\beta _{0})-r_{n}\delta (z).$ By Pakes and Pollard
(1989, Lemma 2.13), the class of functions $\{z\mapsto \mathbb{E}\left[ \psi
_{\tau }\left( l(\varepsilon,z;\beta )+r_{n}\delta (z)\right) \right] :\beta \in B\}$
is \emph{Euclidean(c,d)} for an envelope with a finite fourth moment, with $c
$ and $d$ independent of $n$. Deduce that the empirical process $\nu
_{n}\left( \beta \right) $, $\beta \in B$, is indexed by a class of
functions that is Euclidean for a squared integrable envelope. Finally,
condition (ii) of Corollary 8 of \cite{Sherman1994}, can be checked from
inequalities like in (\ref{eq1}) and conditions on $\left\vert
\dot{g}(z;\beta )-\dot{g}(z;\beta _{0})\right\vert $.

On the other hand, because $\widehat{\beta }$ minimizes $\Gamma _{n}\left(
\beta \right) $ defined in (\ref{est1}) over $\beta $, the directional
derivative of $\Gamma _{n}\left( \beta \right) $ at $\widehat{\beta }$ along
any direction $\gamma $ (with $\left\Vert \gamma \right\Vert =1$) is
nonnegative.\ That is
\begin{eqnarray}\label{dir_der}
0 &\leq &\lim_{t\rightarrow 0}t^{-1}\left[ \Gamma _{n}(\widehat{\beta }%
+t\gamma )-\Gamma _{n}(\widehat{\beta })\right]  \\
&=&-\sum_{\left\{ Y_{i}\neq g(Z_{i};\widehat{\beta })\right\} }\psi _{\tau
}\left( Y_{i}-g(Z_{i};\widehat{\beta })\right) \gamma \,^{\prime }\dot{g}%
(Z_{i};\widehat{\beta }) \nonumber \\
&&+\lim_{t\rightarrow 0}\sum_{\left\{ Y_{i}=g(Z_{i};\widehat{\beta }%
)\right\} }t^{-1}\rho _{\tau }\left( g(Z_{i};\widehat{\beta })-g(Z_{i};%
\widehat{\beta }+t\gamma )\right) \nonumber \\
&=&-\sum_{\left\{ Y_{i}\neq g(Z_{i};\widehat{\beta })\right\} }\psi _{\tau
}\left( Y_{i}-g(Z_{i};\widehat{\beta })\right) \gamma \,^{\prime }\dot{g}%
(Z_{i};\widehat{\beta }) \nonumber\\
&&-\sum_{\left\{ Y_{i}=g(Z_{i};\widehat{\beta })\right\} }\psi _{\tau
}\left( -\gamma \,^{\prime }\dot{g}(Z_{i};\widehat{\beta })\right) \gamma
\,^{\prime }\dot{g}(Z_{i};\widehat{\beta }) \nonumber\\
&=&-D_{1n}(\widehat{\beta })-D_{2n}(\widehat{\beta }). \nonumber
\end{eqnarray}%
By Assumption \ref{qas2}, $|D_{2n}(\widehat{\beta })|$ is bounded by $%
\sum_{\left\{ Y_{i}=g(Z_{i};\widehat{\beta })\right\} }A(Z_{i}).$ As, for
any $x$, the error term $u$ has a continuous law given $Z=z$, the number of
observations with $Y_{i}=g(Z_{i};\widehat{\beta })$ is bounded in
probability as the sample size tends to infinity. On the other hand, the
moment condition on $A\left( \cdot \right) $ implies that $\max_{1\leq i\leq
n}A(Z_{i})=o_{\mathbb{P}}\left( n^{1/2}\right) .$ As $\gamma $ is an arbitrary
direction, it follows that%
\begin{equation}
\frac{1}{\sqrt{n}}\sum_{i=1}^{n}\psi _{\tau }\left( Y_{i}-g(Z_{i};\widehat{%
\beta })\right) \dot{g}(Z_{i};\widehat{\beta })=o_{\mathbb{P}}\left( 1\right) .
\label{negli2}
\end{equation}%
Finally, since $\widehat{\beta }-\beta _{0}=o_{\mathbb{P}}\left( 1\right) $ and $\tau = F_\varepsilon(0\mid Z_i) $, deduce that%
\begin{eqnarray*}
\nu _{n}\left( \beta _{0}\right)  &=&\nu _{n}(\widehat{\beta })+o_{\mathbb{P}}\left(
1\right) \hskip9cm\text{[by (\ref{negli1})]} \\
&=&-\frac{1}{\sqrt{n}}\sum_{i=1}^{n}\mathbb{E}\left[ \psi _{\tau }\left(
Y_{i}-g(Z_{i};\widehat{\beta })\right) \mid Z_{i}\right] \dot{g}(Z_{i};%
\widehat{\beta })+o_{\mathbb{P}}\left( 1\right) \hskip1.8cm\text{[by (\ref{negli2})]}
\\
&=&\frac{1}{\sqrt{n}}\sum_{i=1}^{n}\left[ F_{\varepsilon}\left( g(\,Z_{i};\widehat{%
\beta }\,)-g(Z_{i};\beta _{0})-r_{n}\delta (\,Z_{i}\,)\mid Z_{i}\right)
-\tau \right] \dot{g}(Z_{i};\widehat{\beta })+o_{\mathbb{P}}\left( 1\right)  \\
&=&\left\{ \frac{1}{n}\sum_{i=1}^{n}f_{\varepsilon}(0\mid Z_{i})\dot{g}(Z_{i};\beta
_{0})\dot{g}^{\prime }(Z_{i};\beta _{0})\right\} \sqrt{n}\left( \widehat{%
\beta }-\beta _{0}\right)  \\
&&-r_{n}\left\{ \frac{1}{\sqrt{n}}\sum_{i=1}^{n}f_{\varepsilon}(0\mid Z_{i})\delta
(\,Z_{i}\,)\dot{g}(Z_{i};\beta _{0})\right\}  \\
&&+r_{n}^{2}\sqrt{n}\left\{ \frac{1}{n}\sum_{i=1}^{n}f_{\varepsilon}^{\,\prime }(0\mid
Z_{i})\delta ^{2}(\,Z_{i}\,)\dot{g}(Z_{i};\beta _{0})\right\}  \\
&&+o_{\mathbb{P}}\left( \sqrt{n}\|\widehat{\beta }-\beta _{0}\|\right) +o_{\mathbb{P}}\left(
r_{n}^{2}\sqrt{n}\right) ,
\end{eqnarray*}%
where the last equality is based on a local expansions of $F_{\varepsilon}\left( \cdot
\mid z\right) $ and $g(z;\cdot ).$ By the law of large numbers, the central
limit theorem and the fact that $\nu _{n}\left( \beta _{0}\right)
=O_{\mathbb{P}}\left( 1\right) $ and the random vector $f_{u}(0\mid Z)\delta (\,Z\,)\dot{g}(Z;\beta
_{0})$ has zero mean, we obtain
\begin{equation*}
\mathbb{E}[f_{\varepsilon}(0\mid Z)\dot{g}(Z;\beta _{0})\dot{g}^{\prime }(Z;\beta
_{0})]\sqrt{n}\left( \widehat{\beta }-\beta _{0}\right) +r_{n}^{2}\sqrt{n}%
\mathbb{E}[f_{\varepsilon}^{\,\prime }(0\mid Z)\delta ^{2}(Z)\dot{g}(Z;\beta
_{0})]=O_{\mathbb{P}}(1)
\end{equation*}%
from which the result follows.
\end{proof}

\quad

Lemma \ref{conv_alt} shows in particular that under $\mathcal{H}_{1n},$ $\widehat{%
\beta }-\beta _{0}=O_{\mathbb{P}}(n^{-1/2}+r_{n}^{2}).$ To our best knowledge, this
result on the behavior of $\widehat{\beta }$ under the local alternatives is
new. \cite{He2003} only considered the case $r_{n}=n^{-1/2}$ while \cite{Zheng1998} 
assumed $\widehat{\beta }-\beta ^{\ast }=O_{\mathbb{P}}(n^{-1/2})$ under $%
\mathcal{H}_{1n}$, for some fixed $\beta ^{\ast }$. Our Lemma \ref{conv_alt} indicates
that such $\sqrt{n}-$convergence assumptions on the local alternatives may
be too restrictive. Below, we improve the point (C) in the Theorem of \cite{Zheng1998} also
because we can take into account the rates of convergence of $\widehat{\beta }$ under the alternatives slower than
$O_{\mathbb{P}}(n^{-1/2})$.

{In the case of a fixed deviation from the null hypothesis, that is
  $r_n\equiv 1,$ the tools used for proving Theorem \ref{l_power}
  could be easily adapted to show the $\sqrt{n}-$convergence of
  $\widehat \beta$ to $\beta^*$ that minimizes the map
  $\beta\mapsto\mathbb{E}[\rho_\tau (Y- g(Z,\beta))] =
  \mathbb{E}[\rho_\tau (g(Z,\beta_0)+\delta(Z)+\varepsilon -
    g(Z,\beta))].$ The consistency of the test is then a consequence
  of the fact that $nh^{1/2}I_n(\beta^*)$ tends to infinity.}

%\subsection{Proof of Theorem \ref{l_power}}
%\begin{proof}%[\empty]
Let $\delta _{i}=\delta (Z_{i})$ and let $G_{i}\left( \beta ,\beta
_{0}\right) $ and $K_{h,ij}$ be defined as in equation (\ref{simp_not}). Under $%
\mathcal{H}_{1n} $%
\begin{eqnarray*}
W_{n}(\beta ) &=&\frac{h^{-1}}{n(n-1)}\sum\limits_{j\neq i}\left[ \mathbb{I}%
\{Y_{i}\leq g(Z_{i};\beta )\}-\tau \right] \left[ \mathbb{I}\{Y_{j}\leq
g(Z_{j};\beta )\}-\tau \right] K_{h,ij} \psi_{ij}\\
&=&\frac{h^{-1}}{n(n-1)}\sum\limits_{j\neq i}\left[ \mathbb{I}\{\varepsilon_{i}\leq
G_{i}(\beta ,\beta _{0})-r_{n}\delta _{i}\}-F_{\varepsilon}\left( 0\mid Z_{i}\right) %
\right] \\
&&\qquad \qquad \qquad \times \left[ \mathbb{I}\{\varepsilon_{j}\leq G_{j}(\beta
,\beta _{0})-r_{n}\delta _{j}\}-F_{\varepsilon}\left( 0\mid Z_{j}\right) \right]
K_{h,ij}\psi_{ij}.
\end{eqnarray*}%
Let us decompose
\begin{eqnarray*}
F_{\varepsilon}\left( 0\mid Z_{i}\right) &=&F_{\varepsilon}\left( G_{i}(\beta ,\beta
_{0})-r_{n}\delta _{i}\mid Z_{i}\right) -f_{\varepsilon}\left( 0\mid Z_{i}\right)
\left\{ \dot{g}^{\prime }(Z_{i};\beta _{0})\left( \beta -\beta _{0}\right)
-r_{n}\delta _{i}\right\} \\
&&\qquad -2^{-1}r_{n}^{2}f_{\varepsilon}^{\,\prime }\left( 0\mid Z_{i}\right) \delta
_{i}^{2}+O_{\mathbb{P}}\left( \left\Vert \beta -\beta _{0}\right\Vert
^{2}+r_{n}\left\Vert \beta -\beta _{0}\right\Vert \right) +o_{\mathbb{P}}\left(
r_{n}^{2}\right) .
\end{eqnarray*}%
We can write%
\begin{equation*}
W_{n}(\beta )=W_{1n}(\beta )+2[W_{2n}(\beta )+W_{3n}(\beta )+W_{4n}(\beta
)]+W_{5n}(\beta )+2W_{6n}(\beta )+W_{7n}+R_{n}
\end{equation*}%
where
\begin{multline*}
W_{1n}(\beta )=\frac{h^{-1}}{n(n-1)}\!\sum\limits_{j\neq i}\left[ \mathbb{I}%
\{\varepsilon_{i}\leq G_{i}(\beta ,\beta _{0})-r_{n}\delta _{i}\}-F_{\varepsilon}\left(
G_{i}(\beta ,\beta _{0})-r_{n}\delta _{i}\mid Z_{i}\right) \right] \\
\times \left[ \mathbb{I}\{\varepsilon_{j}\leq G_{j}(\beta ,\beta _{0})-r_{n}\delta
_{j}\}-F_{\varepsilon}\left( G_{j}(\beta ,\beta _{0})-r_{n}\delta _{j}\mid
Z_{j}\right) \right] K_{h,ij}\psi_{ij}
\end{multline*}%
$W_{2n}(\beta )=\left( \beta -\beta _{0}\right) ^{\prime }\widetilde{W}%
_{2n}(\beta )$ with
\begin{eqnarray*}
\widetilde{W}_{2n}(\beta ) &=&\frac{h^{-1}}{n(n-1)}\sum\limits_{j\neq i}%
\left[ \mathbb{I}\{\varepsilon_{i}\leq G_{i}(\beta ,\beta _{0})-r_{n}\delta
_{i}\}-F_{\varepsilon}\left( G_{i}(\beta ,\beta _{0})-r_{n}\delta _{i}\mid
Z_{i}\right) \right] \\
&&\qquad \qquad \qquad \times f_{\varepsilon}\left( 0\mid Z_{j}\right) \dot{g}%
(Z_{j};\beta _{0})K_{h,ij} \psi_{ij},
\end{eqnarray*}
\begin{eqnarray*}
W_{3n}(\beta ) &=&\frac{r_{n}h^{-1}}{n(n-1)}\sum\limits_{j\neq i}\left[
\mathbb{I}\{\varepsilon_{i}\leq G_{i}(\beta ,\beta _{0})-r_{n}\delta
_{i}\}-F_{\varepsilon}\left( G_{i}(\beta ,\beta _{0})-r_{n}\delta _{i}\mid
Z_{i}\right) \right] \\
&&\qquad \qquad \qquad \times f_{\varepsilon}\left( 0\mid Z_{j}\right) \delta
_{j}K_{h,ij} \psi_{ij},
\end{eqnarray*}%
\begin{eqnarray*}
W_{4n}(\beta ) &=&\frac{r_{n}^{2}h^{-1}}{2n(n-1)}\sum\limits_{j\neq i}\left[
\mathbb{I}\{\varepsilon_{i}\leq G_{i}(\beta ,\beta _{0})-r_{n}\delta
_{i}\}-F_{\varepsilon}\left( G_{i}(\beta ,\beta _{0})-r_{n}\delta _{i}\mid
Z_{i}\right) \right] \\
&&\qquad \qquad \qquad \times f_{\varepsilon}^{\,\prime }\left( 0\mid Z_{j}\right)
\delta _{j}^{2}K_{h,ij}  \psi_{ij},
\end{eqnarray*}%
$W_{5n}(\beta )=\left( \beta -\beta _{0}\right) ^{\prime }\widetilde{W}%
_{5n}\left( \beta -\beta _{0}\right) $ with
\begin{equation*}
\widetilde{W}_{5n}=\frac{h^{-1}}{n(n-1)}\sum\limits_{j\neq i}f_{\varepsilon}\left(
0\mid Z_{i}\right) \dot{g}(Z_{i};\beta _{0})\dot{g}^{\prime }(Z_{j};\beta
_{0})f_{\varepsilon}\left( 0\mid Z_{j}\right) K_{h,ij}  \psi_{ij}=O_{\mathbb{P}}(1),
\end{equation*}%
$W_{6n}(\beta )=\left( \beta -\beta _{0}\right) ^{\prime }\widetilde{W}_{6n}$
with
\begin{equation*}
\widetilde{W}_{6n}=\frac{r_{n}h^{-1}}{n(n-1)}\sum\limits_{j\neq
i}f_{\varepsilon}\left( 0\mid Z_{i}\right) \delta _{i}f_{\varepsilon}\left( 0\mid Z_{j}\right)
\dot{g}(X_{j};\beta _{0})K_{h,ij} \psi_{ij}=O_{\mathbb{P}}(r_{n}),
\end{equation*}%
\begin{equation*}
W_{7n}=\frac{r_{n}^{2}h^{-1}}{n(n-1)}\sum\limits_{j\neq i}f_{\varepsilon}\left( 0\mid
Z_{i}\right) \delta (X_{i})f_{\varepsilon}\left( 0\mid Z_{j}\right) \delta
(Z_{j})K_{h,ij}  \psi_{ij} =C_{1}r_{n}^{2}+o_{\mathbb{P}}(r_{n}^{2})
\end{equation*}%
with $C_{1}>0$ and $R_{n}$ a reminder term that is negligible because of the
properties of $f_{\varepsilon}^{\,\prime }$ and $\dot{g}$. Note that the $U-$%
statistics $\widetilde{W}_{5n}$, $\widetilde{W}_{6n}$ and $W_{7n}$ depend
only on the $X_{i}.$ Their orders are obtained from elementary calculations of mean and variance.

Next, we can write $W_{1n}(\beta )=\left\{ W_{1n}(\beta )-W_{1n}(\beta
_{0})\right\} +W_{1n}(\beta _{0}).$ As $W_{1n}(\beta _{0})$ is centered, its
order in probability is given by the variance. We have
\begin{eqnarray*}
\mbox{\rm Var}(W_{1n}(\beta _{0})\mid Z_{1},...,Z_{n}) &=&\frac{1}{%
n^{2}(n-1)^{2}}\sum_{i\neq j}F_{\varepsilon}\left( -r_{n}\delta _{i}\mid Z_{i}\right)
[1-F_{\varepsilon}\left( -r_{n}\delta _{i}\mid Z_{i}\right) ] \\
&&\,\,\,\,\,\times F_{\varepsilon}\left( -r_{n}\delta _{j}\mid Z_{j}\right)
[1-F_{\varepsilon}\left( -r_{n}\delta _{j}\mid Z_{j}\right) ]h^{-2}K_{h,ij}^{2} \psi_{ij}\left(
\mu \right)  \\
&\leq &\frac{h^{-1}}{16n(n-1)}\left[\frac{1}{n(n-1)} \sum_{i\neq j}h^{-1}K_{h,ij}^{2}  \psi_{ij}\right]
\end{eqnarray*}%
The expectation of the last $U-$statistic in the display converges to a constant  while the variance tends to zero. As $W_{1n}(\beta _{0})$ is of zero
conditional mean given the $Z_{i}$, deduce that the variance of
$W_{1n}(\beta _{0})$ is bounded by $Cn^{-2}h^{-1}$. By Chebyshev's
inequality, $W_{1n}(\beta _{0})=o_{\mathbb{P}}\left( r_{n}^{2}\right) $, provided
that $r_{n}^{2}nh^{1/2}\rightarrow \infty .$ Next, let
\begin{multline*}
H_{1n}(Z_i ,Z_j, \beta ) =  \left[ \mathbb{I}
\{\varepsilon_{i}\leq G_{i}(\beta ,\beta _{0})-r_{n}\delta _{i}\}-F_{\varepsilon}\left(
G_{i}(\beta ,\beta _{0})-r_{n}\delta _{i}\mid Z_{i}\right) \right] \\
\times \left[ \mathbb{I}\{\varepsilon_{j}\leq G_{j}(\beta ,\beta _{0})-r_{n}\delta
_{j}\}-F_{\varepsilon}\left( G_{j}(\beta ,\beta _{0})-r_{n}\delta _{j}\mid
Z_{j}\right) \right]   K_{h,ij}  \psi_{ij}, \qquad \beta\in B.
\end{multline*}
By the arguments used for Lemma \ref{conv_alt} above, the class of functions
$\{H_{1n}(\cdot ,\cdot, \beta ) : \beta\in B \}$ is \emph{Euclidean(c,d)}
for an envelope with a finite fourth moment, with $c$ and $d$ independent of $n$.
Now, we can use equation (A.11) of \cite{Zheng1998} and his Lemma 1
with the condition (ii) replaced by $\mathbb{E}[H_{1n}(\cdot ,\beta
)-H_{1n}(\cdot ,\beta _{0})]^{2}\leq \Lambda \left\Vert \beta -\beta
_{0}\right\Vert $. By a close inspection of the proof
of Zheng's Lemma 1, see his equations (A.2) to (A.5), it is obvious to
adapt his conclusion and to deduce that in our setup for any $0<\alpha <1$
\begin{equation*}
W_{1n}(\beta )-W_{1n}(\beta _{0})=O_{\mathbb{P}}\left( n^{-1}h^{-1}\left\Vert \beta
-\beta _{0}\right\Vert ^{\alpha /2}\right) =O_{\mathbb{P}}\left( n^{-1}h^{-1}\left\{
r_{n}+n^{-1/4}\right\} ^{\alpha }\right)
\end{equation*}%
uniformly over $O_{\mathbb{P}}(r_{n}^{2}+n^{-1/2})$ neighborhoods of $\beta _{0}.$
Thus, when $n^{1/2}r_{n}^{2}\rightarrow \infty ,$ we have
\begin{equation*}
W_{1n}(\widehat{\beta })-W_{1n}(\beta _{0})=O_{\mathbb{P}}\left(
n^{-1}h^{-1}r_{n}^{\alpha }\right) =O_{\mathbb{P}}\left( n^{-1/2}\right) =o_{\mathbb{P}}\left(
r_{n}^{2}\right) ,
\end{equation*}%
whereas in the case where $n^{1/2}r_{n}^{2}$ is bounded, use $nh^{1/2}r_{n}^{2}%
\rightarrow \infty $ and take $\alpha $ sufficiently close to one to obtain
\begin{equation*}
W_{1n}(\widehat{\beta })-W_{1n}(\beta _{0})=O_{\mathbb{P}}\left( n^{-1-\alpha
/4}h^{-1}\right) =o_{\mathbb{P}}\left( r_{n}^{2}\right) .
\end{equation*}%

The remaining terms $W_{2n}$, $W_{3n}$ and $W_{4n}$ can be treated in the
following way. By Hoeffding's decomposition
\begin{equation*}
r_{n}^{-1}hW_{3n}(\beta )=U_{n}^{2}(\beta )+U_{n}^{1}(\beta )
\end{equation*}%
with $U_{n}^{1}$, $U_{n}^{2}$ degenerate $U-$processes or order 1 and 2,
respectively. In view of Assumption \ref{qas2}(d) and the fact that $K\left(
\cdot \right) $ is bounded, apply Corollary 4 of \cite{Sherman1994} to deduce
that $U_{n}^{2}(\beta )=O_{\mathbb{P}}\left( n^{-1}\right) $ uniformly in $\beta .$
If $K_{h,ij}\left( \theta \right) =K_{h}((X_{i}-X_{j})^{\prime }\theta )$
and
\begin{equation*}
\xi \left( Z_{i} \right) =\mathbb{E}\left[ \mathbb{E}\left\{
f_{\varepsilon}\left( 0\mid Z_{j}\right) \delta \left( Z_{j}\right) \mid Z_{j}^{\prime
}\theta \right\} h^{-3/4}K_{h,ij} \psi_{ij} \mid Z_{i}\right]
\end{equation*}%
we can write
\begin{equation*}
U_{n}^{1}(\beta )=\frac{h^{3/4}}{n}\!\sum\limits_{i}\left[ \mathbb{I}%
\{\varepsilon_{i}\leq G_{i}(\beta ,\beta _{0})-r_{n}\delta _{i}\}-F_{\varepsilon}\left(
G_{i}(\beta ,\beta _{0})\!-\!r_{n}\delta _{i}\mid Z_{i}\right) \right]
\xi \left( Z_{i} \right) .
\end{equation*}%
By H\"{o}lder inequality, Assumption \ref{qas1}(c) and a change of
variables,
$$
\left\vert \xi \left( Z_{i} \right) \right\vert  \leq \mathbb{E}^{1/4}\left[ \delta^{4}( Z_{j} )\right] \mathbb{E}^{3/4}\left[
h^{-1}K_{h,ij}^{4/3}\mid Z_{i} \right]\leq C,
$$
for some $C>0.$ Now, by Corollary 4 of \cite{Sherman1994}, $h^{-3/4}U_{n}^{1}(\beta
)=O_{\mathbb{P}}\left( n^{-1/2}\right) $ uniformly in $\beta .$ As $%
nh^{1/2}r_{n}^{2}\rightarrow \infty ,$ deduce that
\begin{equation*}
\sup_{\beta }|W_{3n}(\beta )|=O_{\mathbb{P}}\left(
r_{n}h^{-1}n^{-1}+r_{n}h^{-1/4}n^{-1/2}\right) =o_{\mathbb{P}}(r_{n}^{2}).
\end{equation*}%
By similar arguments, $\sup_{\beta }\left\vert W_{4n}(\beta )\right\vert
=o_{\mathbb{P}}(r_{n}^{2})$ (here apply H\"{o}lder inequality with $p=q=2$) and $W_{3n}$, $\sup_{\beta }|\widetilde{W}_{2n}(\beta
)|=O_{\mathbb{P}}\left( h^{-1}n^{-1}+h^{-1/4}n^{-1/2}\right) $, and thus
\begin{equation*}
\sup_{\beta }|W_{2n}(\beta )|=O_{\mathbb{P}}(r_{n}^{2}+n^{-1/2})O_{\mathbb{P}}\left(
h^{-1}n^{-1}+h^{-1/4}n^{-1/2}\right) =o_{\mathbb{P}}(r_{n}^{2}).
\end{equation*}%
Collecting results, under $\mathcal{H}_{1n}$, $T_{n}\geq
Cnh^{1/2}r_{n}^{2}\{1+o_{\mathbb{P}}(1)\}$ or some constants $C>0$. Now, the proof is
complete.
%\end{proof}

%\bigskip

\subsection{Proof of Theorem \protect\ref{as_law_boot}}
Let $W_{n}^{\ast }(\beta )$ be the statistic obtained after replacing $%
U_{i}\left( \beta \right) $ with $U_{i}^*\left( \beta \right)=\mathbb{I}\{Y_{i}^{\ast }\leq g(Z_{i};\beta
)\}-\tau $ in the formula of $W_{n}(\beta ).$ The proof of the bootstrap
procedure consistency follows the steps of the proof of Theorem \ref{as_law}, but requires several specific ingredients: (a) the
convergence in law of $nh^{1/2}W_{n}^{\ast }(\widehat{\beta })/v_{n}$ \emph{conditionally} upon the original sample; and (b)
the $O_{\mathbb{P}}\left(
n^{-1/2}\right) $ rate for $\widehat{\beta }^{\ast }-\widehat{\beta }$, and
the negligibility of $W_{n}^{\ast }(\widehat{\beta }^{\ast })-W_{n}^{\ast }(%
\widehat{\beta })$ given the original sample.
If $S_{1n}^{*}$ and $S_{2n}^{*}$ denote bootstrapped statistics, $S_{1n}^{*}$ is bounded in probability given the sample  if
$$
\lim_{M\rightarrow \infty} \mathbb{P}[|S_{1n}^{*}|>M \mid Y_1,Z_1,\cdots, Y_n,Z_n] = o_p(1).
$$
while  $S_{2n}^{*}$ is asymptotically negligible given the sample  if
$$
\forall \epsilon >0, \qquad \mathbb{P}[|S_{2n}^{*}|>\epsilon \mid Y_1,Z_1,\cdots, Y_n,Z_n] = o_p(1).
$$

The asymptotic normality of $ nh^{1/2}W_{n}^{\ast }(%
\widehat{\beta })/v_{n}$ given the sample is obtained below from a martingale central
limit theorem as stated in \cite{Hall1980}.

\begin{lemma}
\label{hall_heyde}
Under the assumptions of Theorem \ref{as_law_boot},
\begin{equation*}
\sup_{t\in \mathbb{R}}\left\vert \mathbb{P}\left( nh^{1/2}W_{n}^{\ast }(%
\widehat{\beta })/v_{n}\leq t\mid Y_{1},Z_{1},...,Y_{n},Z_{n}\right) -\Phi
(t)\right\vert \rightarrow 0,\qquad \text{in probability}.
\end{equation*}%
\end{lemma}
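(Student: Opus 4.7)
The plan is to verify the conditions of the martingale central limit theorem, in the form given by Corollary 3.1 of \cite{Hall1980}, conditionally on the original sample. Let $\mathcal{F}_n=\sigma(Y_1,Z_1,\ldots,Y_n,Z_n)$ and $\mathcal{F}_{n,i}=\mathcal{F}_n\vee\sigma(w_1,\ldots,w_i)$. Since $|\widehat\varepsilon_i|>0$ almost surely under Assumption \ref{qas1}, the wild bootstrap construction forces $U_i^\ast=\mathbb{I}\{\varepsilon_i^\ast\leq 0\}-\tau$ to equal $1-\tau$ with conditional probability $\tau$ and $-\tau$ with conditional probability $1-\tau$; hence the $U_i^\ast$ are $\mathcal{F}_n$-conditionally i.i.d.\ with mean zero and variance $\tau(1-\tau)$. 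Symmetrizing the $U$-statistic, I would write
\[
\frac{nh^{1/2}}{v_n}W_n^\ast(\widehat\beta)=\sum_{i=1}^n \xi_{ni},\qquad \xi_{ni}=\frac{2h^{-1/2}}{(n-1)v_n}\,U_i^\ast \sum_{j<i} U_j^\ast K_{h,ij}\psi_{ij},\ i\geq 2,
\]
with $\xi_{n1}=0$, so that $\{\xi_{ni},\mathcal{F}_{n,i}\}$ is a martingale difference array.

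The second step is to show that the cumulative conditional variance $V_n:=\sum_i \mathbb{E}[\xi_{ni}^2\mid\mathcal{F}_{n,i-1}]$ tends to $1$ in probability. Substituting $\mathbb{E}^\ast[(U_i^\ast)^2]=\tau(1-\tau)$ and expanding the square isolates a diagonal piece in which, after replacing $(U_j^\ast)^2$ by its conditional mean, the factor $n/(n-1)$ appears against the definition of $v_n^2$. The fluctuation of $(U_j^\ast)^2$ around $\tau(1-\tau)$ contributes a centered remainder whose $\mathcal{F}_n$-variance is controlled by $\sum_j\bigl(\sum_{i>j}K_{h,ij}^2\psi_{ij}^2\bigr)^2$ and the bounded fourth moment of $U_j^\ast$; this is of smaller order than $v_n^4$ under Assumption \ref{K}(c). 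The cross piece $\sum_i\sum_{j\neq k<i}U_j^\ast U_k^\ast K_{h,ij}K_{h,ik}\psi_{ij}\psi_{ik}$ is $\mathcal{F}_n$-centered and, by the same kernel bounds, has $\mathcal{F}_n$-variance of smaller order than $v_n^4$. I would also invoke a routine law of large numbers for the $U$-statistic defining $v_n^2$ to certify that $v_n^2$ converges in probability to a positive constant, which underpins the normalization.

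The third step is to verify the Lindeberg condition via the stronger conditional Lyapunov criterion with fourth moments. Since $|U_i^\ast|\leq 1$ and the $U_j^\ast$ are conditionally i.i.d.\ centered with bounded moments, the classical fourth-moment inequality for sums of bounded centered independent random variables gives
\[
\mathbb{E}^\ast\!\left[\sum_i \mathbb{E}[\xi_{ni}^4\mid\mathcal{F}_{n,i-1}]\right]\lesssim \frac{h^{-2}}{n^4 v_n^4}\sum_{i=2}^n\!\left\{\sum_{j<i}K_{h,ij}^4\psi_{ij}^4+\Bigl(\sum_{j<i}K_{h,ij}^2\psi_{ij}^2\Bigr)^2\right\}=O_\mathbb{P}\!\left(\tfrac{1}{n^2 h}+\tfrac{1}{n}\right),
\]
which vanishes under Assumption \ref{K}(c). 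Combining the two ingredients with Corollary 3.1 of \cite{Hall1980} delivers weak convergence of the $\mathcal{F}_n$-conditional law of $nh^{1/2}W_n^\ast(\widehat\beta)/v_n$ to $\mathcal{N}(0,1)$ in probability, and Pólya's theorem upgrades this to uniform convergence in $t$ since $\Phi$ is continuous.

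The hardest part will be the bookkeeping for the cross term in $V_n$. Although it has zero $\mathcal{F}_n$-mean, its $\mathcal{F}_n$-variance is a quadruple kernel sum weighted by second moments of the $U_j^\ast$ that couples four distinct indices, and one must show that it is $o_\mathbb{P}(v_n^4)$. This calculation is longer than the analogue in Theorem \ref{as_law} because the ``residuals'' here are random functions of the sample, but it relies on no new probabilistic input beyond boundedness of the $U_j^\ast$ and the kernel estimates flowing from Assumptions \ref{qas1}(b) and \ref{K}; it is here that the condition $n^\alpha h^2\to\infty$ is used most tightly.
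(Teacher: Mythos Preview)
Your proposal is correct and follows essentially the same route as the paper: both write $nh^{1/2}W_n^\ast(\widehat\beta)$ as a martingale array with increments $G_{n,i}^\ast\propto U_i^\ast\sum_{j<i}U_j^\ast K_{h,ij}\psi_{ij}$, split the cumulated conditional variance into a diagonal piece that matches $v_n^2$ and a cross piece shown to be $o_{\mathbb P}(1)$ by a second-moment computation, and then verify Lindeberg via a fourth-moment (Lyapunov) bound of order $O(n^{-1})+O(n^{-2}h^{-1})$, invoking Hall--Heyde's Corollary~3.1 (the paper passes to a.s.-convergent subsequences to transfer the in-probability verifications into a conditional statement, a step you gloss over but which is standard). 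The only inaccuracy is the aside that ``$|\widehat\varepsilon_i|>0$ almost surely'': in quantile regression finitely many fitted residuals vanish, but this affects only $O_{\mathbb P}(1)$ summands and is harmless.
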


\medskip

\begin{proof} The proof is based on  the Central limit Theorem (CLT) for martingale arrays, see Corollary 3.1 of \cite{Hall1980}.
Recall that $U_{i}^\ast (\widehat{\beta })= \mathbb{I}\{Y_{i}^{\ast
}\leq g(Z_{i};\widehat{\beta }^{\ast })\}-\tau $. Define the martingale array $\left\{ S_{n,m}^{*},\,\mathcal{F}_{n,m}^{*},\,1\leq m\leq n,\, n\geq1\right\} $
where $S_{n,1}^{*}=0$ and $S_{n,m}^{*}=\sum_{i=2}^{m}G_{n,i}^{*}$
with
\[
G_{n,i}^{*}=\dfrac{2h^{-1/2}}{n-1}U_{i}^{*}(\widehat{\beta})\sum_{j=1}^{i-1}
U_{j}^{*}(\widehat{\beta})K_{h,ij}\psi_{ij},
\]
and $\mathcal{F}_{n,m}^{*}$ is the $\sigma$-field generated by $\left\{ \overline{Z},\eta_{1},\dots,\eta_{m}\right\} $ where
$\overline{Z} = \left\{Y_{1},\dots,Y_{n}, Z_{1},\dots,Z_{n}\right\}$.
Thus $nh^{1/2}W_{n}^{*}(\widehat{\beta})=S_{n,n}^{*}$. Next define
\begin{eqnarray*}
V_{n}^{2*} & = & \sum_{i=2}^{n}\mathbb{E}\left[G_{n,i}^{2*}\mid\mathcal{F}_{n,i-1}^{*}\right]\\
 & = & \dfrac{4h^{-1}\tau(1-\tau)}{(n-1)^{2}}\sum_{i=2}^{n}\sum_{j=1}^{i-1}\sum_{k=1}^{i-1}U_{j}^{*}(\widehat{\beta})U_{k}^{*}(\widehat{\beta})K_{h,ij}K_{h,ik}\psi_{ij}\psi_{ik}\\
 & = & \dfrac{4h^{-1}\tau(1-\tau)}{(n-1)^{2}}\sum_{i=2}^{n}\sum_{j=1}^{i-1}U_{j}^{*2}(\widehat{\beta})K_{h,ij}^{2}\psi_{ij}^{2}\\
 &  & +\dfrac{8h^{-1}\tau(1-\tau)}{(n-1)^{2}}\sum_{i=3}^{n}\sum_{j=2}^{i-1}\sum_{k=1}^{j-1}U_{j}^{*}(\widehat{\beta})U_{k}^{*}(\widehat{\beta})K_{h,ij}K_{h,ik}\psi_{ij}\psi_{ik}\\
 & = & A_{n}^{*}+B_{n}^{*}.
\end{eqnarray*}
Recall that
$$
v_{n}^{2}=\frac{2 h^{-1}\,\tau ^{2}(1-\tau )^{2}%
}{n(n-1)}\sum\limits_{j\neq i}K_{h,ij}^2 \psi^2_{ij}
$$
and by standard calculations of the means and variance it could be shown to tend to a positive constant. Next,
note that
$$
\mathbb{E}\left[A_{n}^{*}\mid \overline{Z} \right] = \dfrac{4h^{-1}\tau(1-\tau)}{(n-1)^{2}}\sum_{i=2}^{n}\sum_{j=1}^{i-1}
\mathbb{E}\left[U_{j}^{*2}(\widehat{\beta})\mid\overline{Z}\right]K_{h,ij}^{2}\psi_{ij}^{2}
= \dfrac{n }{n-1}\;v_{n}^{2} .
$$
Moreover,
\begin{eqnarray*}
\mathbb{E}\left[\mbox{Var}\left(A_{n}^{*}\mid \overline{Z}\right)\right] & = & \dfrac{16\tau^{2}(1-\tau)^{2}}{h^{2}(n-1)^{4}}\\
&& \;\; \times \sum_{i=2}^{n}\sum_{i^{\prime}=2}^{n}\sum_{j=1}^{i\wedge i^{\prime}-1}\mathbb{E}\left[\mathbb{E}\left[U_{j}^{*4}(\widehat{\beta})-\tau^{2}(1-\tau)^{2}\vert\overline{Z}\right]K_{h,ij}^{2}K_{h,i^{\prime}j}^{2}\psi_{ij}^{2}\psi_{i^{\prime}j}^{2}\right]\\
 & = & \dfrac{16\tau^{4}(1-\tau)^{4}\{\tau(1-\tau)(1-3\tau(1-\tau))-1\}}{h^{2}(n-1)^{4}} \\
&& \qquad \qquad \qquad \qquad \qquad \qquad\;\; \times
 \sum_{i=2}^{n}\sum_{i^{\prime}=2}^{n}\sum_{j=1}^{i\wedge i^{\prime}-1}\mathbb{E}\left[K_{h,ij}^{2}K_{h,i^{\prime}j}^{2}\psi_{ij}^{2}\psi_{i^{\prime}j}^{2}\right]\\
 & = & \dfrac{32\tau^{4}(1-\tau)^{4}(\tau(1-\tau)(1-3\tau(1-\tau))-1)}{h^{2}(n-1)^{4}}\\
&& \qquad \qquad\qquad \qquad \qquad \qquad\;\; \times \sum_{i=3}^{n}\sum_{i^{\prime}=2}^{i-1}\sum_{j=1}^{i^{\prime}-1}\mathbb{E}\left[K_{h,ij}^{2}K_{h,i^{\prime}j}^{2}\psi_{ij}^{2}\psi_{i^{\prime}j}^{2}\right]\\
 &  & +\dfrac{16\tau^{4}(1-\tau)^{4}(\tau(1-\tau)(1-3\tau(1-\tau))-1)}{h^{2}(n-1)^{4}}\sum_{i=2}^{n}\sum_{j=1}^{i-1}\mathbb{E}\left[K_{h,ij}^{4}\psi_{ij}^{4}\right]\\
 & = & O(n^{-1})+O(n^{-2}h^{-1})
\end{eqnarray*}
because $\psi_{ij}$, $\mathbb{E}\left[h^{-1}K_{h,ij}^{4}\right]$
and $\mathbb{E}\left[h^{-2}K_{h,ij}^{2}K_{h,i^{\prime}j}^{2}\right]$
are bounded for all pairwise
distinct indexes $i$, $i^{\prime}$ and $j$. Deduce that
 $A_{n}^{*}/v_{n}^{2}\rightarrow 1$ in probability.
On the other hand,
$$
\mathbb{E}\left[B_{n}^{*2}\right] = \dfrac{8\tau^{4}(1-\tau)^{4}}{h^{2}(n-1)^{4}}\sum_{i=3}^{n}\sum_{j=2}^{i-1}\sum_{k=1}^{j-1}\mathbb{E}\left[K_{h,ij}^{2}K_{h,ik}^{2}\psi_{ij}^{2}\psi_{ik}^{2}\right]=  O(n^{-1})
$$
so that $V_{n}^{2*}/v_{n}^{2}\rightarrow 1$ in probability.
To use the CLT it remains to check the Lindeberg condition. For any $\epsilon>0$,
\begin{multline*}
\mathbb{E}\left[ \sum_{i=2}^{n}\mathbb{E}\left[G_{n,i}^{*2}\mathbb{I}(G_{n,i}^{*2}>\epsilon)\mid\mathcal{F}_{n,i-1}^{*}\right] \right]
\leq \epsilon^{-4}\mathbb{E}\left[ \sum_{i=2}^{n}\mathbb{E}\left[G_{n,i}^{*4}\mid\mathcal{F}_{n,i-1}^{*}\right]\right]\\
\leq  \dfrac{16\tau^{3}(1-\tau)^{3}\{1-3\tau(1-\tau)\}}{\epsilon^{4}h^{2}(n-1)^{4}}\sum_{i=2}^{n}\sum_{j=1}^{i-1}\sum_{k=1}^{i-1}\mathbb{E}\left[K_{h,ij}^{2}K_{h,ik}^{2}\psi_{ij}^{2}\psi_{ik}^{2}\right]\\
\leq  \dfrac{32\tau^{3}(1-\tau)^{3}\{1-3\tau(1-\tau)\}}{\epsilon^{4}h^{2}(n-1)^{4}}\sum_{i=2}^{n}\sum_{j=1}^{i-1}\sum_{k=1}^{j-1}\mathbb{E}\left[K_{h,ij}^{2}K_{h,ik}^{2}\psi_{ij}^{2}\psi_{ik}^{2}\right]\\
+\dfrac{16\tau^{3}(1-\tau)^{3}\{1-3\tau(1-\tau)\}}{\epsilon^{4}h^{2}(n-1)^{4}}\sum_{i=2}^{n}\sum_{j=1}^{i-1}\mathbb{E}\left[K_{h,ij}^{4}\psi_{ij}^{4}\right]\\
=  O(n^{-1})+O(n^{-2}h).
\end{multline*}
Eventually, applying the CLT for martingale arrays along the subsequences of $V_{n}^{2*}$ that converge almost surely to the limit of $v_n^2$ and subsequences for which the Lindeberg condition is satisfied almost surely,
the result follows. \end{proof}

\qquad

To obtain
the $O_{\mathbb{P}}\left(
n^{-1/2}\right) $ rate for $\widehat{\beta }^{\ast }-\widehat{\beta }$, and
the negligibility of $W_{n}^{\ast }(\widehat{\beta }^{\ast })-W_{n}^{\ast }(%
\widehat{\beta })$ given the original sample, we use a conditional version of the moment
inequality for $U-$processes proved by \cite{Sherman1994}.  Before stating this new result that has its own interest  let us introduce some more
notation: for $k$ a positive integer let $(n)_{k}=n(n-1)...(n-k+1)$ and let $%
\mathbf{i_{k}^{n}}=(i_{1},...,i_{k})$ be a $k-$tuple of distinct integers
from the set $\{1,...,n\}$. Similarly, $\mathbf{i_{k}^{2n}}=(i_{1},...,i_{k})
$ denotes a $k-$tuples of distinct integers from $\{1,...,2n\}$. Moreover, a
function $g$ on $\mathcal{S}^{k}$ is called degenerate if for each $i=1,...,k
$, and all $s_{1},...,s_{i-1},s_{i+1},...,s_{k}\in \mathcal{S}$, $\mathbb{E}%
[g(s_{1},...,s_{i-1},S,s_{i+1},...,s_{k})]=0$.

\begin{lemma}
\label{sher} Let $k$ be a positive integer and $\mathcal{G}$ a degenerate
class of real-valued functions on $\mathbb{R}^{1+q}\times ...\times\mathbb{R}%
^{1+q}$. Suppose $\mathcal{G}$ is \emph{Euclidean(c,d)} for a squared integrable
envelope and some $c,d>0$. Fix $z_1,...,z_n \in \mathbb{R}^{q}$ and let $%
u_1, ...,u_n, u_{n+1}, ...,u_{2n}$ be independent copies of the random
variable $u$. For $i=1,...,n$, let $v_i = (u_i,z_i)$ and $v_{n+i} =
(u_{n+i},z_i)$. Define $g_{\mathbf{i_k^n}}(u_{i_1}, \ldots , u_{i_k}) =
g(v_{i_1}, \ldots , v_{i_k})$ and define $g_{\mathbf{i_k^{2n}}}$ similarly.
Suppose that for any $k-$tuple $\mathbf{i_k^n}$, the function $g_{\mathbf{%
i_k^n}}$ is degenerate as a function of $u_i$ variables (necessarily the
same property holds also for any $k-$tuple $\mathbf{i_k^{2n}}$). Let
\begin{equation*}
U^k_{n, z_1,...,z_n} (g)= (n)_k^{-1}\sum_{\mathbf{i_k^n }} g_{\mathbf{i_k^n}%
}(u_{i_1}, \ldots , u_{i_k}), \quad U^k_{2n, z_1,...,z_n} (g)=
(2n)_k^{-1}\sum_{\mathbf{i_k^{2n}}} g_{\mathbf{i_k^{2n}}}(u_{i_1}, \ldots ,
u_{i_k}).
\end{equation*}
Then for any $\alpha\in (0,1)$, there exists a constant $\Lambda$ depending
only on $\alpha$ and $k$ (and independent of $n$ and the sequence $%
z_1,...,z_n$) such that
\begin{equation*}
\mathbb{E} \left[ \sup_{\mathcal{G}}| n^{k/2} U^k_{n, z_1,...,z_n}(g) | %
\right] \leq \Lambda \mathbb{E}^{1/2} \left[ \sup_{\mathcal{G}} \{ U^k_{2n,
z_1,...,z_n}(g^2)\}^\alpha \right] .
\end{equation*}
\end{lemma}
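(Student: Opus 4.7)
The plan is to adapt Sherman's (1994) unconditional moment inequality for degenerate U-processes to the present conditional setting, treating $z_{1},\ldots,z_{n}$ as fixed parameters throughout the argument. The proof combines decoupling for U-statistics, Rademacher symmetrization, and chaining controlled by Euclidean covering numbers, all carried out pointwise in $z_{1},\ldots,z_{n}$.

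First, I would apply Sherman's (1994) decoupling/symmetrization step to $n^{k/2}U^{k}_{n,z_{1},\ldots,z_{n}}(g)$. Since the $u_{i}$'s are i.i.d.\ even after conditioning on the $z_{i}$'s, the classical decoupling inequality of de la Pe\~na (used in Sherman's Lemma~5) replaces the U-statistic over $k$-tuples from $\{1,\ldots,n\}$ by a sum over $k$-tuples drawn from $k$ disjoint blocks of independent copies, at the cost of a universal constant depending only on $k$. A Rademacher symmetrization then inserts independent signs $\varepsilon_{i}$, again at a fixed cost, yielding a randomized process whose supremum dominates the original quantity up to constants.

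The core step is chaining. The key observation is that the Euclidean$(c,d)$ property of $\mathcal{G}$ with respect to \emph{any} measure on $(\mathbb{R}^{1+q})^{k}$ transfers, with the same constants, to the induced class
$$
\mathcal{G}_{z_{1},\ldots,z_{n}}=\left\{(u_{1},\ldots,u_{n})\mapsto g_{\mathbf{i_{k}^{n}}}(u_{i_{1}},\ldots,u_{i_{k}}):\ g\in\mathcal{G}\right\}
$$
viewed as functions of the $u_{i}$ variables alone. Indeed, for any measure $\mu$ on $(\mathbb{R})^{k}$, setting $\lambda=\mu\otimes\delta_{(z_{i_{1}},\ldots,z_{i_{k}})}$ yields $\int|g_{\mathbf{i_{k}^{n}}}-g'_{\mathbf{i_{k}^{n}}}|^{2}d\mu=\int|g-g'|^{2}d\lambda$, so any $L^{2}(\lambda)$-cover of $\mathcal{G}$ induces an $L^{2}(\mu)$-cover of $\mathcal{G}_{z_{1},\ldots,z_{n}}$ of the same cardinality, with envelope given by $G$ evaluated at the fixed $z_{i_{l}}$. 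Running Sherman's chaining argument (his Main Corollary, via his Lemma~6) on the randomized decoupled process, degeneracy in the $u$-arguments plus the uniform covering bound gives, for any $\alpha\in(0,1)$, a constant $\Lambda=\Lambda(\alpha,k,c,d)$ with
$$
\mathbb{E}\bigl[\sup_{\mathcal{G}}|n^{k/2}U^{k}_{n,z_{1},\ldots,z_{n}}(g)|\bigr]\leq\Lambda\,\mathbb{E}^{1/2}\bigl[\sup_{\mathcal{G}}\{U^{k}_{2n,z_{1},\ldots,z_{n}}(g^{2})\}^{\alpha}\bigr],
$$
which is the claimed bound.

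The main obstacle is bookkeeping: one must verify that every constant arising along the way (in decoupling, in symmetrization, and especially in the chaining entropy integral) is independent of both $n$ and the specific sequence $z_{1},\ldots,z_{n}$. For decoupling and symmetrization this is standard since those steps use only the i.i.d.\ structure of the $u_{i}$'s. For chaining, independence from $z_{1},\ldots,z_{n}$ follows from the uniformity of the Euclidean property in the underlying measure, together with the fact that the $L^{2}$-integral of the envelope $G$ that appears in the chaining bound is absorbed into the term $U^{k}_{2n,z_{1},\ldots,z_{n}}(g^{2})$ on the right-hand side. Once this uniformity is verified, Sherman's (1994) argument goes through verbatim with $z_{1},\ldots,z_{n}$ treated as inert parameters.
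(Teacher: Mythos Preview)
Your proposal is correct and follows essentially the same route as the paper: both adapt Sherman's (1994) Main Corollary by running symmetrization, the maximal/chaining inequality, and the Euclidean covering bound with $z_{1},\ldots,z_{n}$ held fixed, relying on the i.i.d.\ structure of the $u_i$'s and the uniformity of the Euclidean$(c,d)$ property in the underlying measure. The only cosmetic difference is that the paper carries out the symmetrization via Sherman's explicit $\widetilde{g}$ construction with a single ghost sample $u_{n+1},\ldots,u_{2n}$ (which is what makes $U^{k}_{2n,z_{1},\ldots,z_{n}}(g^{2})$ appear naturally on the right), whereas you phrase it as de~la~Pe\~na decoupling into $k$ blocks; the paper itself notes these two formulations are interchangeable.
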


\medskip

\begin{proof}
We sketch the steps of the proof that follows the lines of the proof of the
Main Corollary in \cite{Sherman1994}. For the sake of
simplicity, we only consider the case of Euclidean families
for a constant envelope. Fix $n$ and $z_1,...,z_n$ arbitrarily.

\emph{i) Symmetrization inequality.} For each $g\in \mathcal{G}$ define $%
\widetilde{g}(\mathbf{i_{k}^{n}})$ as a sum of $2^{k}$ terms, each having
the form
\begin{equation*}
(-1)^{r}g_{\mathbf{i_{k}^{n}}}(u_{i_{1}}^{\ast },\ldots ,u_{i_{k}}^{\ast })
\end{equation*}%
with $u_{i_{j}}^{\ast }$ equal to either $u_{i_{j}}$ or $u_{n+i_{j}}$ where $%
i_{j}$ ranges over the set $\{1,...,n\}$, and $r$ is the number of elements $%
u_{i_{1}}^{\ast },...,u_{i_{k}}^{\ast }$ belonging to $\{u_{n+1},...,u_{2n}\}
$. Independently, take a sample $\sigma _{1},...,\sigma _{n}$ of Rademacher
random variables, that is symmetric variables on the two points set $\{-1,1\}
$. Let $\Phi $ be a convex function on $[0,\infty )$. Then
\begin{equation}
\mathbb{E}\Phi \left( \sup_{\mathcal{G}}\left\vert \sum_{\mathbf{i_{k}^{n}}%
}g_{\mathbf{i_{k}^{n}}}(u_{i_{1}},\ldots ,u_{i_{k}})\right\vert \right) \leq
\mathbb{E}\Phi \left( \sup_{\mathcal{G}}\left\vert \sum_{\mathbf{i_{k}^{n}}%
}\sigma _{i_{1}}\ldots \sigma _{i_{k}}\widetilde{g}(\mathbf{i_{k}^{n}}%
)\right\vert \right) .  \label{symm1}
\end{equation}%
The proof of this inequality is omitted as it can be derived with only
formal changes from the proof of \cite{Sherman1994}'s symmetrization inequality.
It can be also be derived from the lines of \cite{Pena1999}, 
Theorem 3.5.3 (see also Remark 3.5.4 of de la Pe\~{n}a and Gin\'{e}).

\emph{ii) Maximal inequality.} The following arguments are similar to those
in \cite{Sherman1994}, section 5. Define the stochastic process
\begin{equation*}
Z(g)=n^{k/2}\sum_{\mathbf{i_{k}^{n}}}\sigma _{i_{1}}\ldots \sigma _{i_{k}}%
\widetilde{g}(\mathbf{i_{k}^{n}}),\quad g\in \mathcal{G}
\end{equation*}%
and the pseudo-metric $%
d_{U_{2n}^{k}}(g_{1},g_{2})=[U_{2n,z_{1},...,z_{n}}^{k}(|g_{1}-g_{2}|^{2})]^{1/2}
$. Finally, let us remark that for each $g$, by Cauchy-Schwarz inequality
and the definitions of $\widetilde{g}(\mathbf{i_{k}^{n}})$ and $g_{\mathbf{%
i_{k}^{2n}}}$ we have
\begin{equation*}
\sum_{\mathbf{i_{k}^{n}}}\widetilde{g}(\mathbf{i_{k}^{n}})^{2}\leq
2^{k}\sum_{\mathbf{i_{k}^{2n}}}g_{\mathbf{i_{k}^{2n}}%
}^{2}(u_{i_{1}},...,u_{i_{k}})=2^{k}(2n)_{k}U_{2n,z_{1},...,z_{n}}^{k}(g^{2})
\end{equation*}%
which is the counterpart of inequality (5) of \cite{Sherman1994}. Now, we have
all the ingredients to continue exactly as in the proof of Sherman's maximal
inequality and to deduce that for any positive integer $m$
\begin{equation*}
\mathbb{E}\left[ \sup_{\mathcal{G}}|n^{k/2}U_{n,z_{1},...,z_{n}}^{k}(g)|%
\right] \leq \Gamma \mathbb{E}\left[ \int_{0}^{\delta
_{n}^{k}}[D(x,d_{U_{2n}^{k}},\mathcal{G})]^{1/2m}dx\right]
\end{equation*}%
where $D(\epsilon ,d_{U_{2n}^{k}},\mathcal{G})$ are the \emph{packing numbers%
} of the set $\mathcal{G}$ with respect to the pseudometric $d_{U_{2n}^{k}}$%
, $\delta _{n}^{k}=\sup_{\mathcal{G}}\sqrt{U_{2n,z_{1},...,z_{n}}^{k}(g^{2})}
$ and $\Gamma $ is a constant depending only on $m$ and $k$.

\emph{iii) Moment inequality for Euclidean families.} If $\mathcal{G}$ is
\emph{Euclidean(c,d)} for a constant envelope equal to one, then the packing
number $D(\epsilon ,d_{U_{2n}^{k}},\mathcal{G})$ is bounded by $c\epsilon
^{-d}$. To check this, apply the definition of an Euclidean family for $%
\mathcal{G}$ with $\mu $ the measure that places mass $(2n)_{k}^{-1}$ at
each of the $(2n)_{k}$ pairs $(v_{i},v_{j})$, $1\leq i\neq j\leq 2n$.
Finally, our result follows using the arguments of the Main Corollary of
\cite{Sherman1994}.
\end{proof}

\qquad

To establish the rate of  $\widehat{\beta }^{\ast }-\widehat{\beta }$ given the sample, it suffices to consider a simplified version of our Lemma \ref{conv_alt}. By Lemma \ref{sher}, $\sup_{\beta }\left\vert
n^{-1}\Gamma _{n}^*\left( \beta \right) -\mathbb{E}\left[ \rho _{\tau
}(Y-g(Z ;\beta ))\mid \overline{Z} \right] \right\vert$ is asymptotically negligible given the sample $\overline{Z} = \left\{Y_{1},\dots,Y_{n}, Z_{1},\dots,Z_{n}\right\}$. Reconsidering the arguments for the consistency of argmax estimators along almost surely convergent subsequences depending on $\overline{Z}$, deduce that $\widehat\beta^* - \widehat \beta$ is a asymptotically negligible given the sample $\overline{Z}.$ Next, define the empirical process
$$
\nu ^*_{n}\left( \beta \right) = \frac{1}{\sqrt{n}}\sum_{i=1}^{n}\left\{ \psi
_{\tau }\left( Y_{i}^* - g(Z_{i};\beta ) \right) - \mathbb{E} [ \psi
_{\tau }\left( Y_{i} - g(Z_{i};\beta ) \right) \mid \overline{Z} ] \right\} \dot{g}(Z_{i};\beta )
$$
indexed by $\beta .$ Lemma \ref{sher} guarantees that $\sup_{\beta} |\nu ^*_{n}\left( \beta \right)|,$ and in particular $\nu ^*_{n}( \widehat \beta^* )- \nu ^*_{n}( \widehat \beta )$, are bounded in probability given the sample. Proceeding like in (\ref{dir_der}), that is using the directional
derivative of $\Gamma _{n}^*\left( \beta \right) $ at $\widehat{\beta }^*$ along
any direction $\gamma ,$ deduce
\begin{equation*}
\frac{1}{\sqrt{n}}\sum_{i=1}^{n}\psi _{\tau }\left( Y_{i}^*-g(Z_{i};\widehat{%
\beta }^*)\right) \dot{g}(Z_{i};\widehat{\beta }^*)
\end{equation*}
is bounded in probability given the sample (conditional negligibility could be also derived but boundedness given the sample suffices for the present purpose). Since for all $i,$
$$
\mathbb{E}\left[ \psi _{\tau }\left(
Y_{i}^*-g(Z_{i};\widehat{\beta }^*)\right) \mid \overline{Z}\right]
=F_{\varepsilon^*}\left( g(\,Z_{i};\widehat{%
\beta }^*)-g(Z_{i};\widehat \beta)\mid \overline{Z}\right)
-\tau ,
$$
and for any sample $\overline{Z}$, the distribution function $F_{\varepsilon^*}( \cdot\mid \overline{Z})$ is that of the uniform law on $[-\tau, 1-\tau],$
the boundedness of $\sqrt{n} (\widehat\beta^* -\widehat\beta)$ follows by a Taylor expansion of $F_{\varepsilon^*}( \cdot\mid \overline{Z})$ around the origin, exactly like in the proof of Lemma \ref{conv_alt} in the case $r_n=0$. The case of the wild bootstrap and linear quantile regression follows as a consequence of Theorem 1 of \cite{Feng2011}. The arguments of Theorem 1 of \cite{Feng2011} could be adapted to nonlinear models using  a linearization like in the proof of Lemma \ref{conv_alt}. The details are omitted.

Finally, using Lemma \ref{sher}, derive conditional versions of
Lemma 1 of \cite{Zheng1998} and of Corollary 4 of Sherman in the case of
constant envelopes. Combine these results with the fact that $\sqrt{n}(\widehat \beta^* - \widehat \beta)$ is bounded in probability given the sample  and follow
the lines of the proof of Theorem \ref{as_law} above to deduce that for any $%
\varepsilon >0$%
\begin{equation*}
\mathbb{P}\left( nh^{1/2}\left\vert W_{n}^{\ast }(\widehat{\beta }^{\ast
})-W_{n}^{\ast }(\widehat{\beta })\right\vert >\varepsilon \mid
Y_{1},Z_{1},...,Y_{n},Z_{n}\right) \rightarrow 0,\quad \text{in probability.}
\end{equation*}%

\newpage

\begin{figure}
\includegraphics[scale=0.6,angle=270]{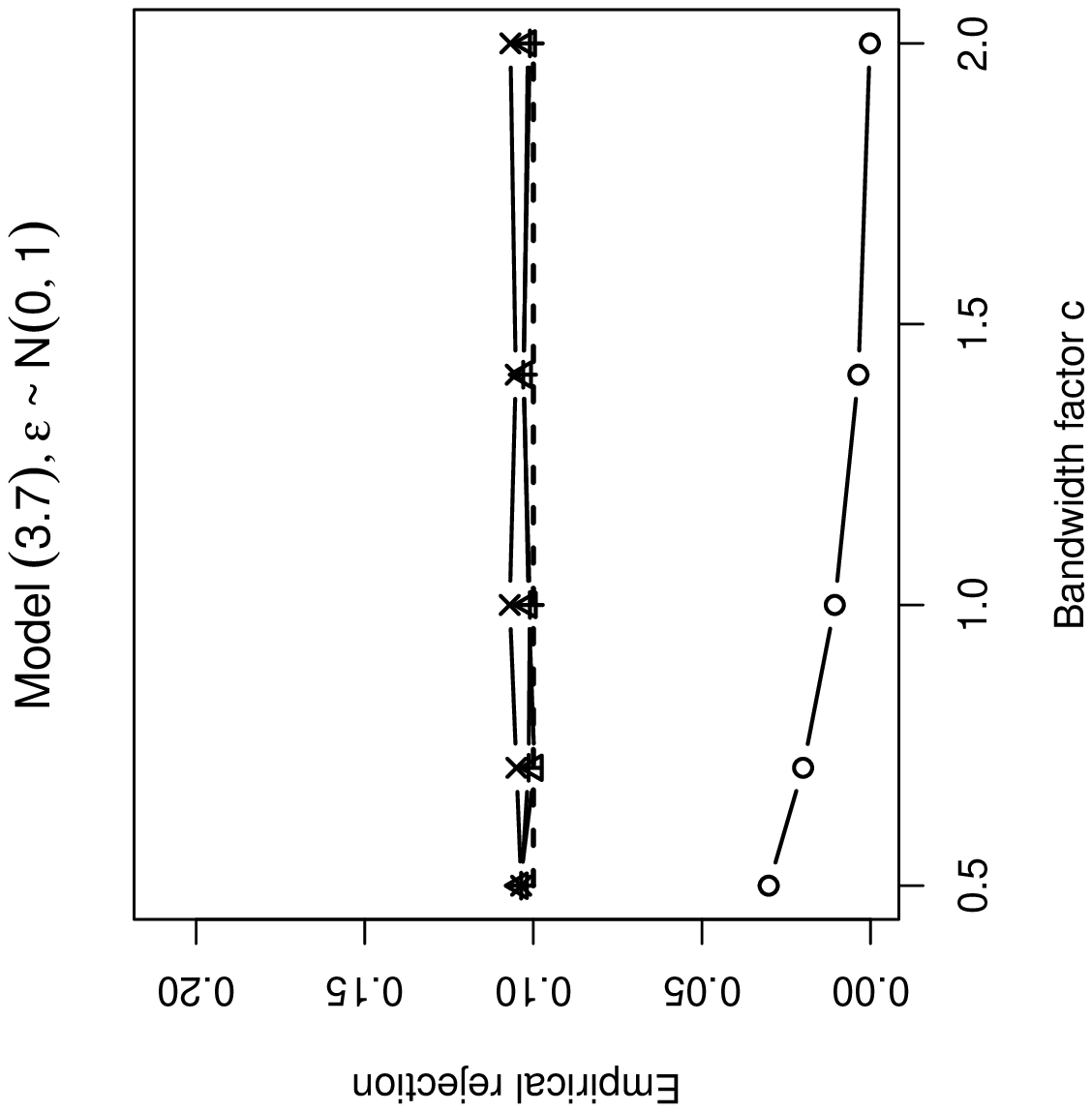}\includegraphics[scale=0.6,angle=270]{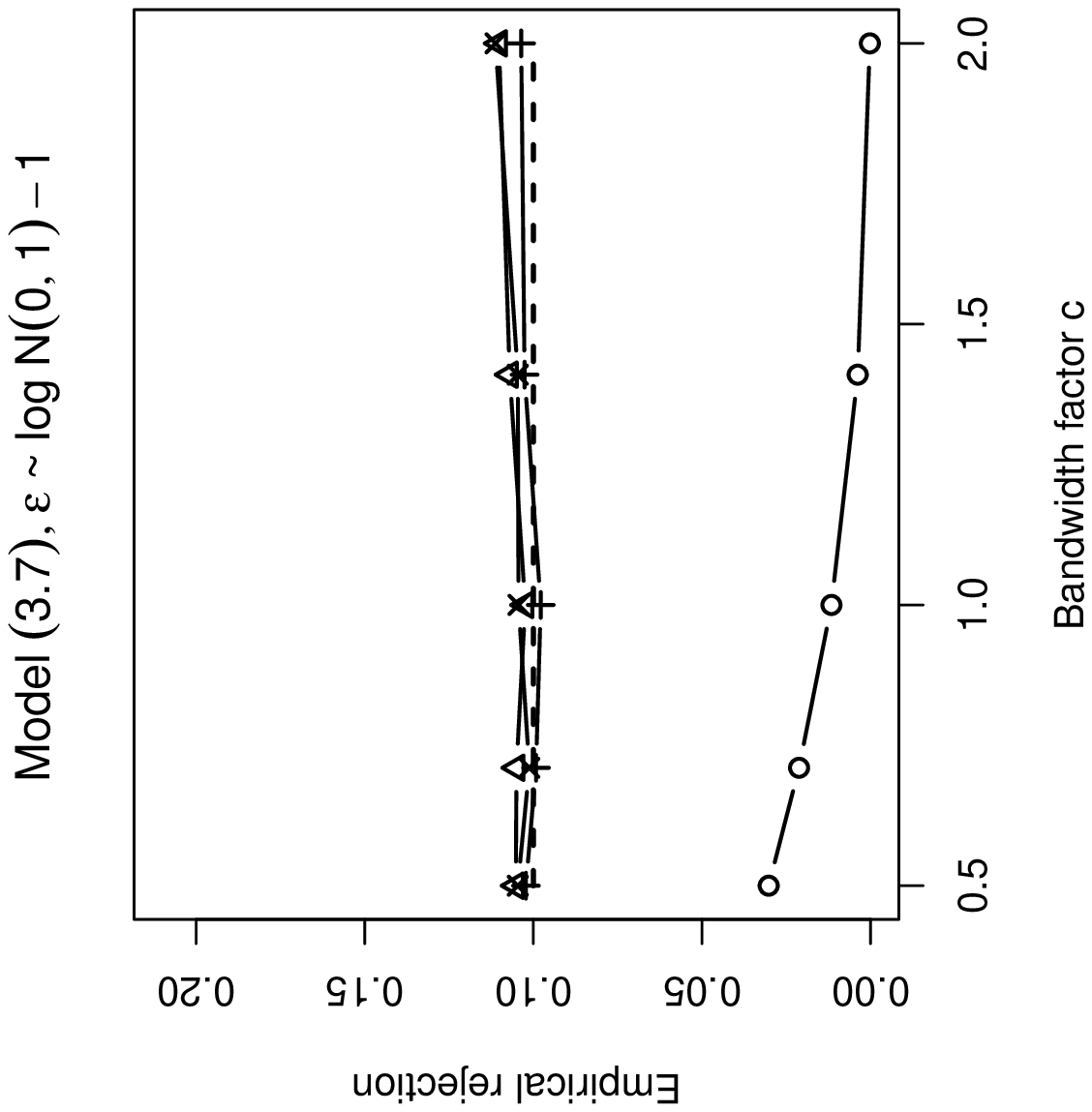}

\includegraphics[scale=0.6,angle=270]{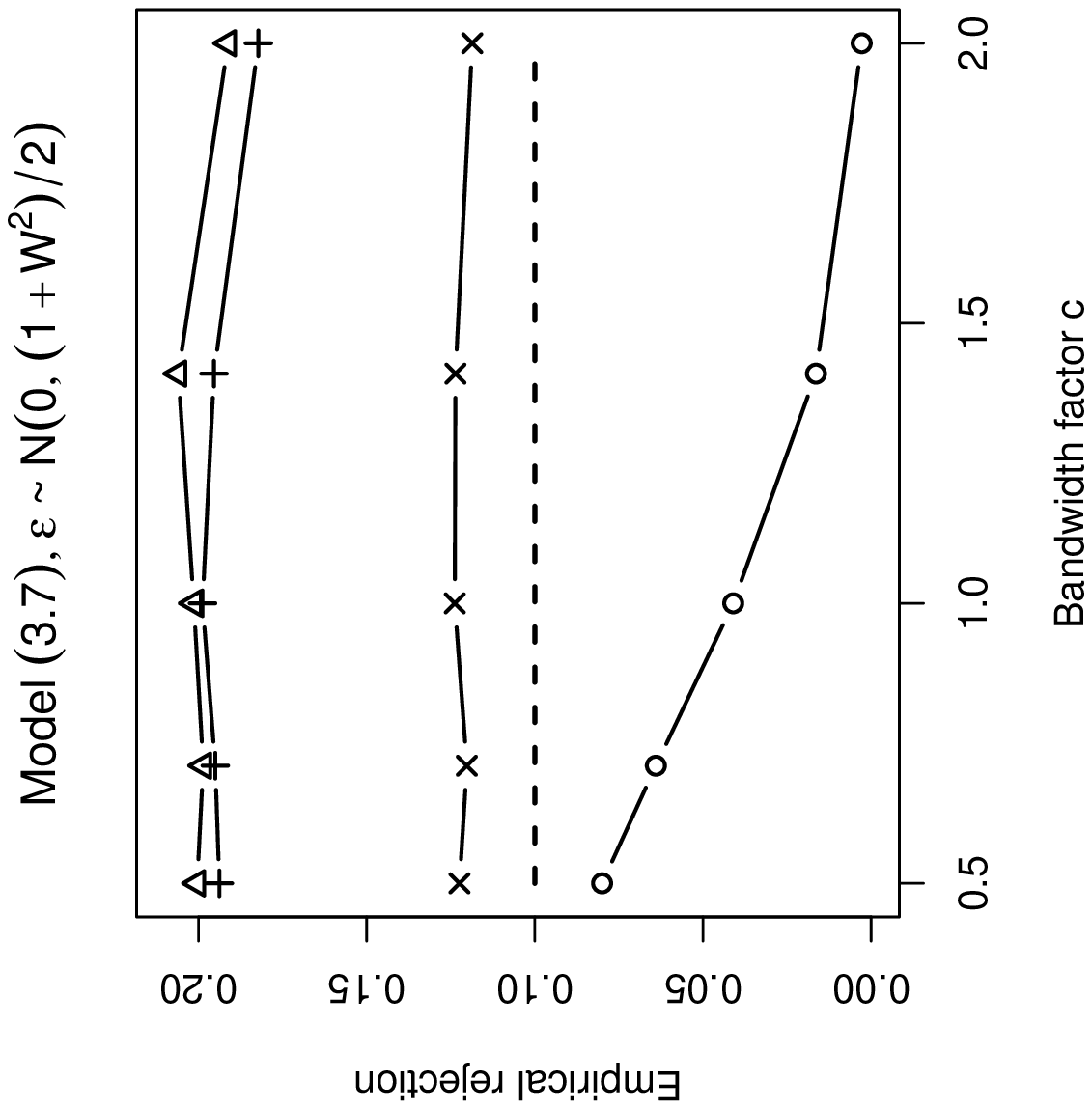}\includegraphics[scale=0.6,angle=270]{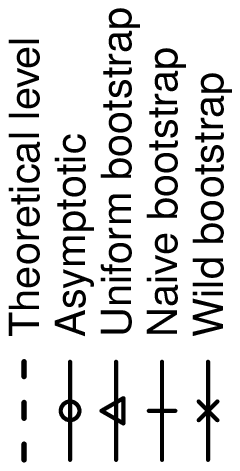}

\caption{Empirical rejections under $H_{0}$ with model (\ref{eq:setup1}) as a function of the bandwidth,
$n=100$\label{fig:Level}}
\end{figure}
\begin{figure}
\includegraphics[scale=0.6,angle=270]{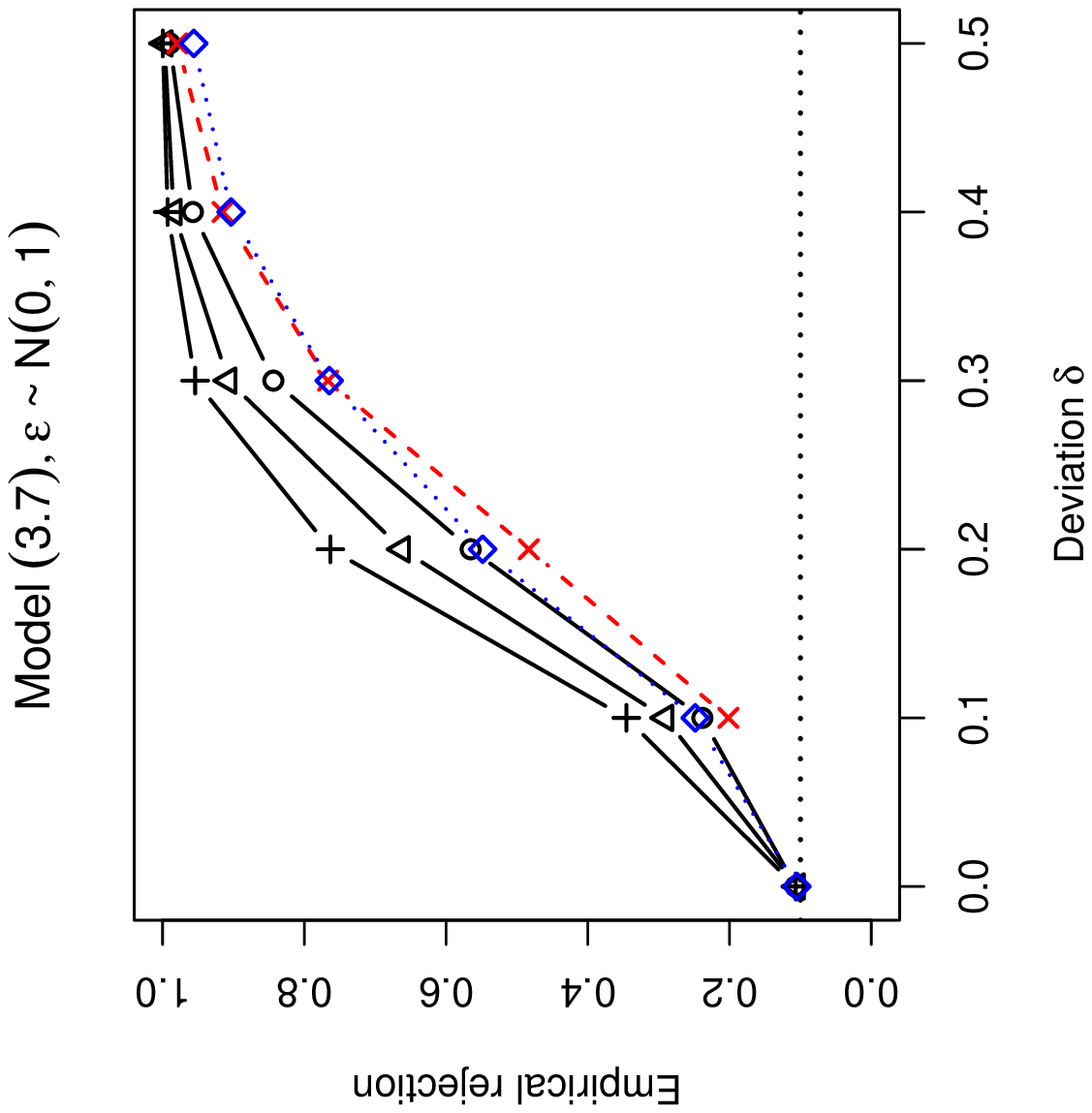}\includegraphics[scale=0.6,angle=270]{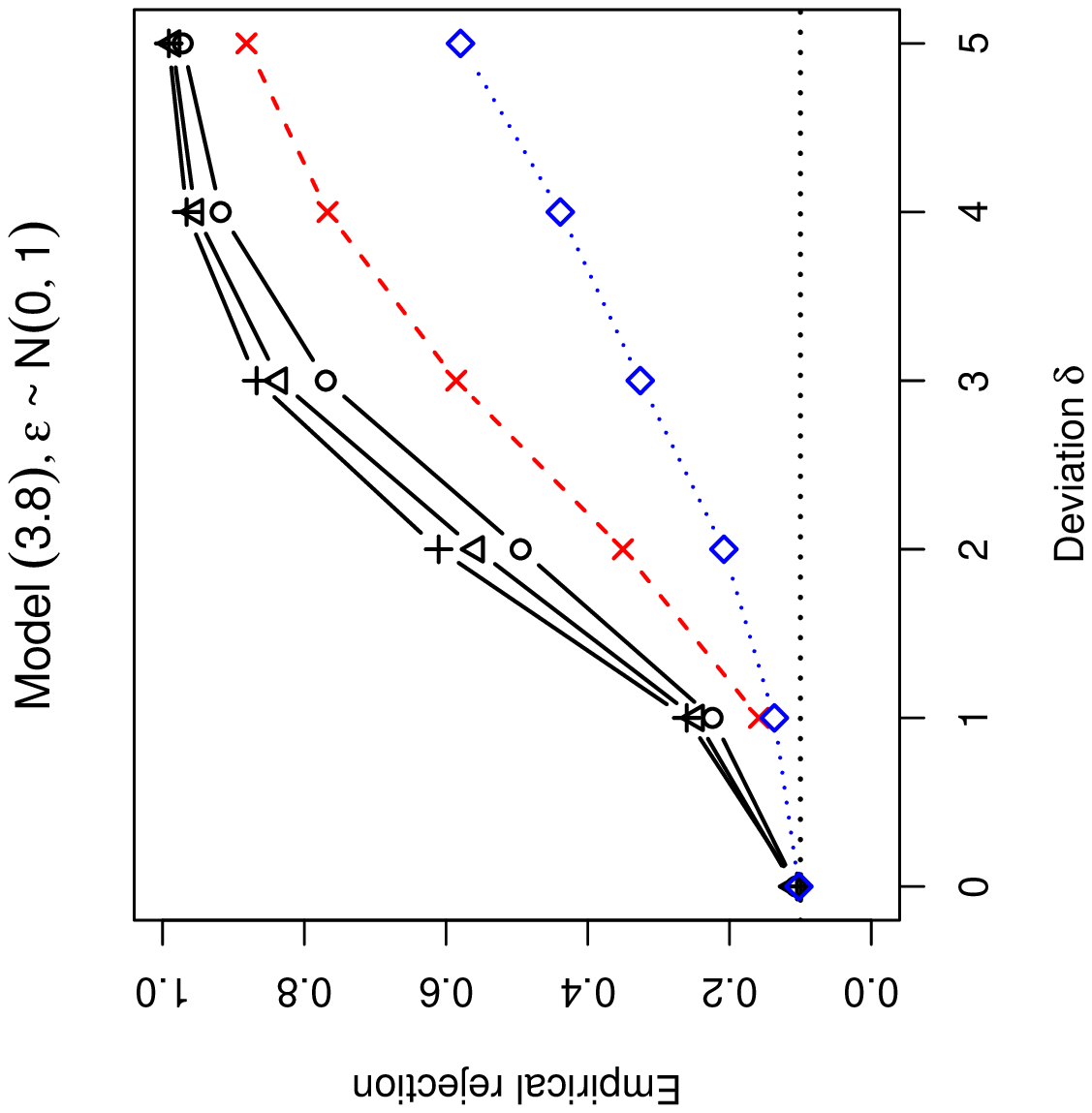}

\includegraphics[scale=0.6,angle=270]{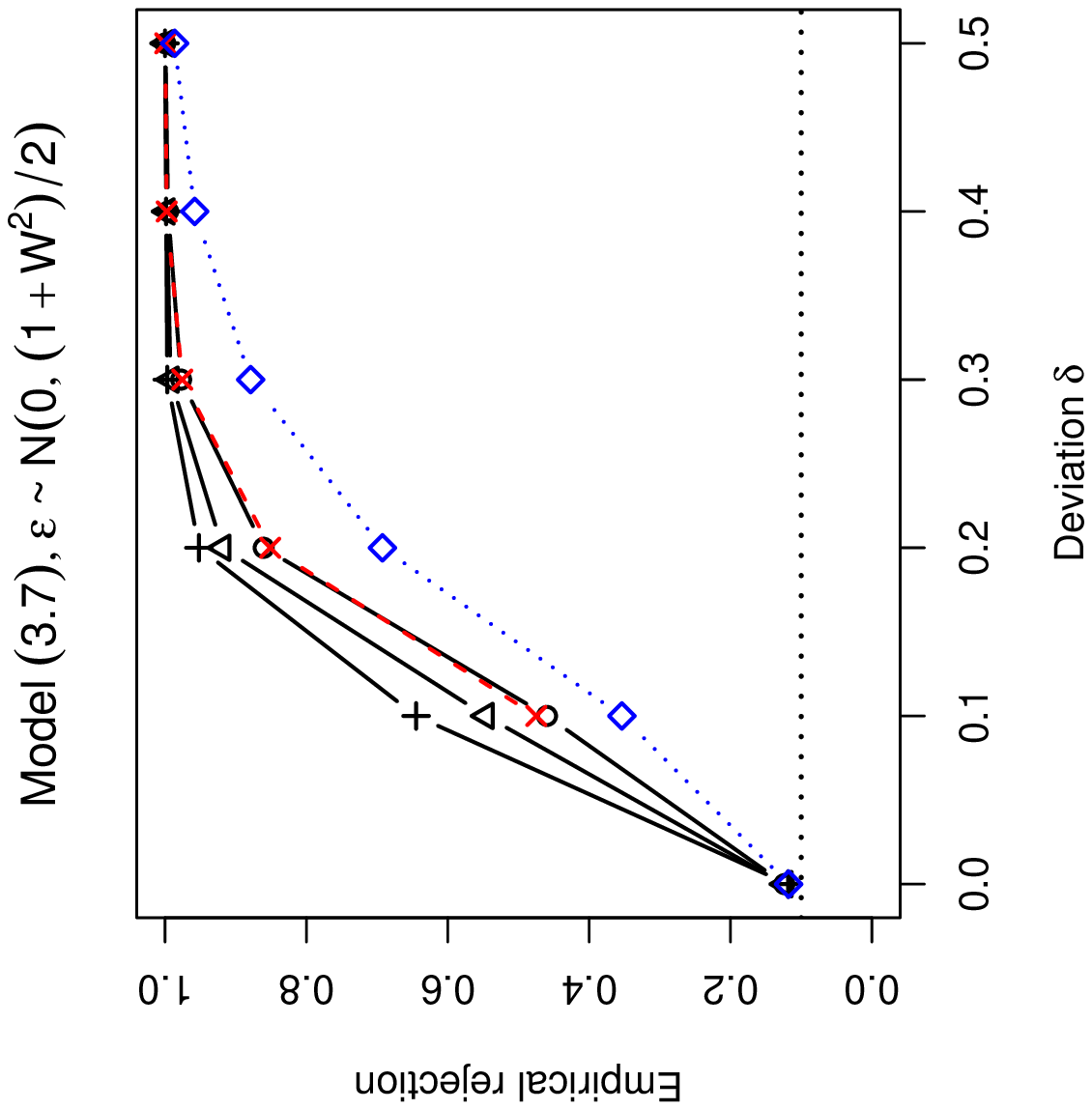}\includegraphics[scale=0.6,angle=270]{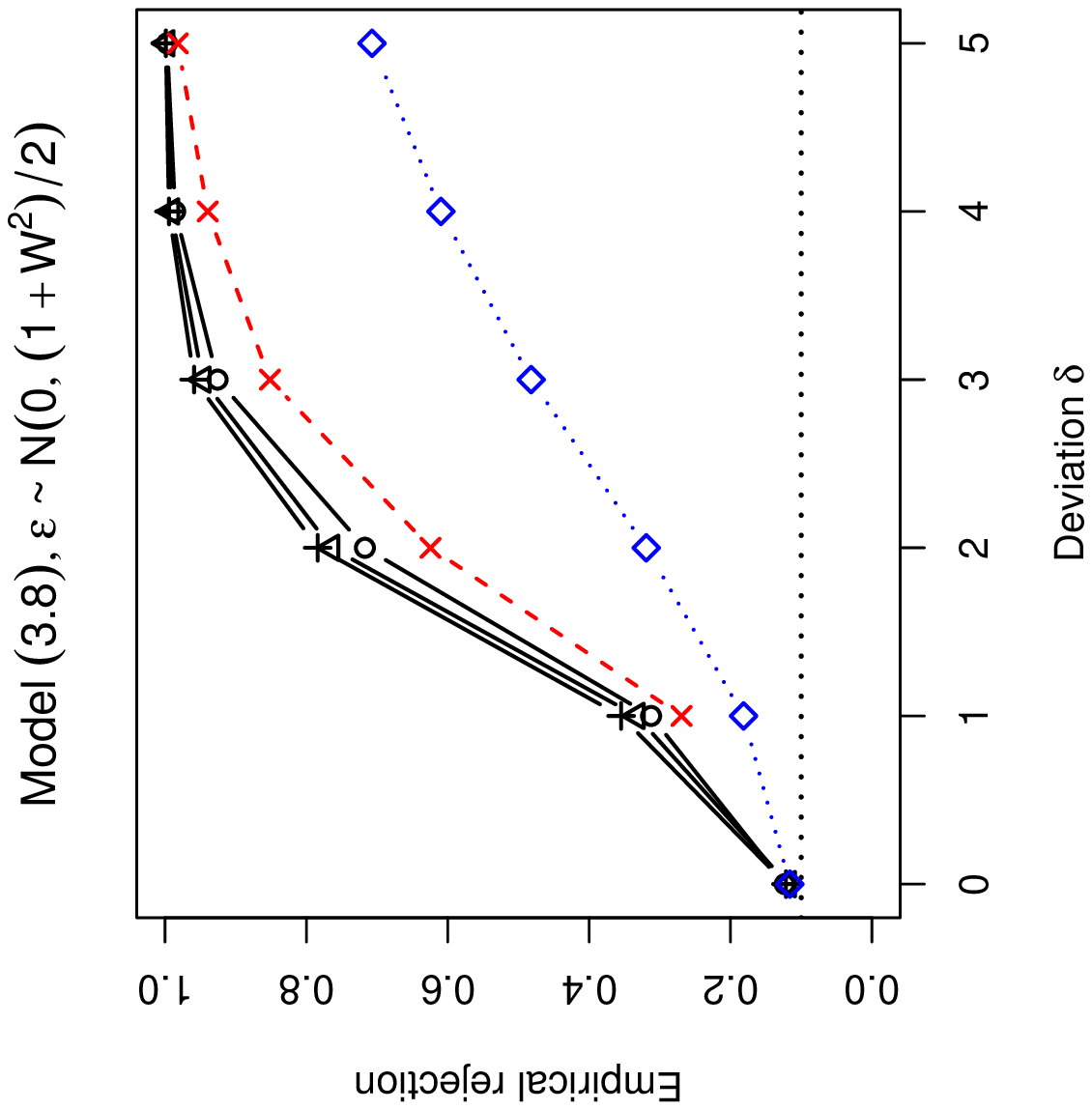}

\begin{centering}\includegraphics[bb=190bp 420bp 420bp 0bp,clip,scale=0.6,angle=270]{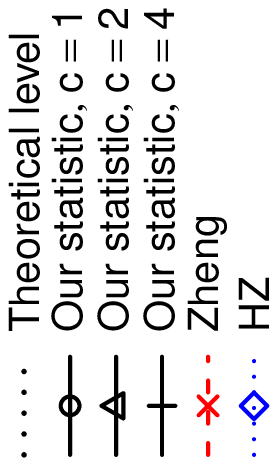}\par\end{centering}

\caption{Power curves for models (\ref{eq:setup1}) and (\ref{eq:setup2}), $n=100$.\label{fig:Power}}
\end{figure}

\newpage

\begin{table}
\caption{Application: estimation results and tests
  p-values \label{tab:Regression}}
\begin{tabular}{rrrrrrr}
  \hline
%\\
%& \multicolumn{2}{c}{Model\,(\ref{eq:HZ2-CG})}
%& \multicolumn{2}{c}{Model\,(\ref{eq:LMP1-CG})}
%& \multicolumn{2}{c}{Model\,(\ref{eq:LMP2-CG})}
%\\
&  $\tau=0.5$  &  $\tau=0.1$
&  $\tau=0.5$  &  $\tau=0.1$
&  $\tau=0.5$  &  $\tau=0.1$
\\
  \hline
  CIGAR & -5.35 & -7.53 & -5.05 & -8.36 & -5.07 & -8.07 \\
   & (2.28) & (4) & (2.3) & (3.53) & (2.36) & (3.25) \\
  WTGAIN & 8.09 & 14.73 & 7.69 & 14.96 & 8.31 & 15.91 \\
   & (1.33) & (0.75) & (1.32) & (1.2) & (1.31) & (1.4) \\
AGE & -9.34 & -5.13 & 43.6 & 133.67 & 78.59 & 117.62 \\
   & (3.82) & (4.47) & (50.59) & (30.11) & (45.85) & (48.42) \\
  AGESQ &  &  & -0.84 & -2.23 & -1.38 & -1.94 \\
   &  &  & (0.81) & (0.5) & (0.72) & (0.82) \\
  BOY &  &  &  &  & 137.22 & -5.22 \\
   &  &  &  &  & (34.35) & (47.33) \\
  BLACK &  &  &  &  & -177.78 & -124.18 \\
   &  &  &  &  & (75.09) & (69.17) \\
  MARRIED &  &  &  &  & 21.62 & 41.75 \\
   &  &  &  &  & (48.39) & (54.66) \\
  NOVISIT &  &  &  &  & -211.62 & -275.15 \\
   &  &  &  &  & (406.72) & (112.5) \\
\hline
HZ  & 0.347 & 0.227 & 0.266 & 0.356 & 0.272 & 0.135 \\
Our test c=1 & 0.791 & 0.165 & 0.738 & 0.942 & 0.068 & 0.972 \\
Our test c=2 & 0.704 & 0.044 & 0.741 & 0.968 & 0.078 & 0.796 \\
 \hline
\end{tabular}
\end{table}

\end{document}